\newlength{\fixboxwidth}     
\newcommand{\R}{{\mathbb R}}
\newcommand{\N}{{\mathbb N}}
\newcommand{\Z}{{\mathbb Z}}
\renewcommand{\P}{\mathbb P}
\newcommand\expect{\operatorname{\mathbb{E}}}
\DeclareMathOperator\Cov{Cov}
\DeclareMathOperator\Uniform{unif}
\DeclareMathOperator\Normal{\mathcal{N}}
\newcommand{\dint}{\,\mathup{d}}
\newcommand{\diff}{\mathup{d}} 
\newcommand{\Torus}{{\mathbb T}}
\newcommand{\Hilbert}{{\mathcal H}}
\newcommand\Korobov{\textup{Kor}}
\DeclareMathOperator\linspan{span}
\DeclareMathOperator\Vol{Vol}
\DeclareMathOperator*\esssup{ess\,sup}
\DeclareMathOperator\id{id}
\DeclareMathOperator\modulo{mod}
\newcommand\ran{\textup{ran}}
\newcommand\deter{\textup{det}}
\DeclareMathOperator\diam{diam}
\DeclareMathOperator\App{APP}
\DeclareMathOperator\Int{INT}
\newcommand\eps{\varepsilon}
\newcommand\euler{\mathup{e}}
\newcommand\imag{\mathup{i}}
\newcommand\embed{\hookrightarrow}
\newcommand{\zeros}{\boldsymbol{0}}
\newcommand\veck{\mathbf{k}}
\newcommand\vecx{\mathbf{x}}
\newcommand\vecy{\mathbf{y}}
\newcommand\vecX{\mathbf{X}}
\newcommand\veclambda{\boldsymbol{\lambda}}
\DeclareMathAlphabet{\mathup}{OT1}{\familydefault}{m}{n}
  \providecommand*{\toclevel@author}{999}
  \providecommand*{\toclevel@title}{0}
\theoremstyle{plain}       
\newtheorem{theorem}{Theorem}[section]      
\newtheorem{lemma}[theorem]{Lemma}     
\newtheorem{proposition}[theorem]{Proposition}      
\theoremstyle{definition}
\newtheorem*{notation}{Notation}
\newtheorem{example}[theorem]{Example}     
\newtheorem{remark}[theorem]{Remark}
\numberwithin{equation}{section}   
\numberwithin{theorem}{section}
\newcommand{\thmref}[1]{\hyperref[#1]{Theorem~\ref*{#1}}}
\newcommand{\lemref}[1]{\hyperref[#1]{Lemma~\ref*{#1}}}
\newcommand{\propref}[1]{\hyperref[#1]{Proposition~\ref*{#1}}}
\newcommand{\corref}[1]{\hyperref[#1]{Corollary~\ref*{#1}}}
\newcommand{\remref}[1]{\hyperref[#1]{Remark~\ref*{#1}}}
\newcommand{\chapref}[1]{\hyperref[#1]{Chapter~\ref*{#1}}}
\newcommand{\secref}[1]{\hyperref[#1]{Section~\ref*{#1}}}
\newcommand{\defref}[1]{\hyperref[#1]{Definition~\ref*{#1}}}
\newcommand{\exref}[1]{\hyperref[#1]{Example~\ref*{#1}}}
\definecolor{darkgrey}{rgb}{0.4, 0.4, 0.4}
\definecolor{lightgrey}{rgb}{0.75,0.75,0.75}
\begin{document}

\title{Breaking the Curse for Uniform Approximation
	in Hilbert Spaces
	via Monte Carlo Methods}
\author{Robert J. Kunsch\thanks{E-mail: robert.kunsch@uni-osnabrueck.de}}
\affil{Universit\"at Osnabr\"uck, Institut f\"ur Mathematik,
	Albrechtstr.~28a, 49076 Osnabr\"uck, Germany}
\date{\today}
\maketitle

\begin{abstract}
	We study the $L_{\infty}$-approximation of $d$-variate functions
	from Hilbert spaces via linear functionals as information.
	It is a common phenomenon in tractability studies that unweighted problems
	(with each dimension being equally important)
	suffer from the curse of dimensionality in the deterministic setting,
	that is, the number $n(\eps,d)$ of information needed in order to solve
	a problem to within a given accuracy~$\eps$ grows exponentially in~$d$.
	We show that for certain approximation problems
	in periodic tensor product spaces,
	in particular Korobov spaces with smoothness~$r > 1/2$,
	switching to the randomized setting can break the curse of dimensionality,
	now having polynomial tractability,
	namely $n(\eps,d) \preceq \eps^{-2} \, d \, (1 + \log d)$.
	Similar benefits of Monte Carlo methods in terms of tractability
	have only been known for integration problems so far.
\end{abstract}

\noindent
\textbf{Keywords.\;} Monte Carlo approximation;
Hilbert spaces;
information-based complexity;
linear information;
polynomial tractability;
curse of dimensionality.

\section{Introduction}

Concerning the problem of computing
the integral $\Int(f) := \int_{[0,1]^d} f(\vecx) \dint\vecx$
of a function~$f: [0,1]^d \to \R$
based on information from $n$~function evaluations,
it is known that for many classes of input functions
deterministic methods suffer from the \emph{curse of dimensionality},
that is, the number~$n^{\deter}(\eps,d)$ of function values needed
in order to guarantee some given accuracy~\mbox{$\eps > 0$}
grows exponentially in~$d$.
One example of such input classes where the curse holds,
are $C^r$-functions
with bounded partial derivatives, see~\cite{HNUW17}.
Another type of problems can be formulated with functions
that are bounded in the norm of certain unweighted
tensor product Hilbert spaces,
see~\cite[Chapter~16]{NW10}.
Usually, the standard Monte Carlo method,
\begin{equation} \label{eq:stdMC}
	M_n(f) := \frac{1}{n} \sum_{i=1}^n f(\vecX_i) \,,
	\qquad \vecX_i \stackrel{\textup{iid}}{\sim{}} \Uniform([0,1]^d) \,,
\end{equation}
can be used to bound the complexity in the randomized setting,
\mbox{$n^{\ran}(\eps,d) \leq \lceil \eps^{-2} \rceil$},
if functions under consideration are bounded in the~$L_2$-norm,
$\|f\|_2 \leq 1$.
Offering an upper bound which is independent from~$d$
and polynomial in~$\eps^{-1}$,
these problems are \emph{strongly polynomially tractable}
when using Monte Carlo.
It has been unknown so far
whether there exist $d$-variate function approximation problems
where Monte Carlo methods may help in a similar way.
Function approximation problems seem to be more difficult
than integration problems,
because instead of just a single real number (namely the value of an integral)
we aim to find a representation for an approximating function.
The question whether Monte Carlo does help, however,
is independent from that.
There are examples of function approximation problems
where Monte Carlo methods still suffer from the curse of dimensionality,
namely uniform approximation of certain classes of $C^{\infty}$-functions,
see~\cite{Ku16}, or \cite[Section~2.4.2]{Ku17} for an extended discussion.
In contrast, the aim of the present paper is to present an example of
Function approximation problems where the curse of dimensionality
does hold for deterministic methods,
but Monte Carlo methods achieve \emph{polynomial tractability}
with~\mbox{$n^{\ran}(\eps,d) \preceq \eps^{-2} \, d \, (1 + \log d)$}.
Somewhere in between integration and approximation
is the paper of Heinrich and Milla~\cite{HeM11}
on indefinite integration of $L_p$-functions,
where the output is Banach space valued
and only randomized methods can solve the problem,
and actually provide polynomial tractability.

In this paper we study the uniform approximation in
Hilbert spaces~$\Hilbert^d$ of functions on a $d$-dimensional domain,
$\App: \Hilbert^d \embed L_{\infty}$, based on information from $n$ linear functionals.
In particular we consider
periodic functions defined on the $d$-torus~\mbox{$\Torus^d = [0,1)^d$}.
This type of embeddings
is a special case of a linear problem~$S: F \rightarrow G$
where~$F$ and $G$ are Banach spaces.
(In this paper, for simplicity, we restrict to Banach spaces over the reals.)
A general deterministic approximation method for such a problem
is a mapping
\mbox{$A_n : F \stackrel{N}{\to} \R^n \stackrel{\phi}{\to} G$}
with an information mapping~\mbox{$N(f) = (L_1(f),\ldots,L_n(f))$},
where the $L_i$ are continuous linear functionals acting on~$F$.
(One could think about choosing the information functionals adaptively,
but this will not be necessary in the context of this paper.)
The error of a deterministic method is defined by the worst case,
\begin{equation*}
	e(A_n,S)
		:= \sup_{\|f\|_F \leq 1} \|Sf - A_n(f)\|_G \,.
\end{equation*}
Randomized methods are families~\mbox{$(A_n^{\omega})_{\omega}$} of such mappings,
indexed by a random element~\mbox{$\omega \in \Omega$}
from a suitable probability space~\mbox{$(\Omega,\Sigma,\P)$},
usually we impose additional measurability assumptions.
The error for a single input is now averaged over~$\omega$, we define
\begin{equation*}
	e((A_n^{\omega})_{\omega},S)
		:= \sup_{\|f\|_F \leq 1} \expect \|Sf - A_n^{\omega}(f)\|_G \,.
\end{equation*}
For both settings we define the \emph{$n$-th minimal error},
that is the error achievable
by optimal algorithms that use $n$~pieces of information,
\begin{equation*}
	e^{\deter}(n,S) = \inf_{A_n} e(A_n,S) \,,
	\qquad \text{and} \qquad
	e^{\ran}(n,S) := \inf_{(A_n^{\omega})_{\omega}} e((A_n^{\omega})_{\omega},S) \,.
\end{equation*}
The \emph{initial error} $e(0,S) = \|S\|_{F \to G}$ coincides for both settings.
In tractability studies we mainly focus on the inverse notion
\emph{$\eps$-complexity}.
We define the
\emph{(worst case) deterministic complexity}~\mbox{$n^{\deter}(\eps,S)$},
that is the minimal~$n \in \N_0$
for which we can find a deterministic algorithm~$A_n$
that guarantees an error smaller than or equal~$\eps > 0$,
analogously we have the \emph{Monte Carlo complexity}~\mbox{$n^{\ran}(\eps,S)$}
using randomized algorithms.
For more details on these and related notions
from \emph{information-based complexity} (IBC),
we refer to the books~\cite{NW08,TWW88}.

We will see that under quite natural assumptions
on a $d$-dependent family of unweighted tensor product spaces~$\Hilbert^d$,
the deterministic complexity
\mbox{$n^{\deter}(\eps,\Hilbert^d \embed L_{\infty})$}
of uniform approximation grows exponentially in~$d$ for some fixed~$\eps > 0$.
In other words,
the \emph{curse of dimensionality} holds in the deterministic setting,
see \thmref{thm:curseperiodic}.
In contrast,
the Monte Carlo complexity~$n^{\ran}(\eps,\Hilbert^d \embed L_{\infty})$
will depend only polynomially on~$\eps^{-1}$ and $d$,
which proves \emph{polynomial tractability} in the randomized setting.
Hence, Monte Carlo breaks the curse,
see \propref{prop:MCUBperiodic}.
This holds in particular for so-called Korobov spaces with smoothness~$r > 1/2$,
see \thmref{thm:Korobov}.

In the deterministic setting, linear methods
\begin{equation*}
	A_n(f) := \sum_{i=1}^n L_i(f) \, g_i \,,
	\qquad L_i \in (\Hilbert^d)^*, \, g_i \in L_{\infty} \,,
\end{equation*}
are known to be optimal, namely for two reasons,
where each in its own right would suffice already:
\begin{itemize}
	\item the input set is the unit ball of a Hilbert space,
	\item the error is measured with respect to the $L_{\infty}$-norm,
\end{itemize}
see~\cite[Theorem~4.5 and~4.8]{NW08}.
In detail,
\mbox{$(g_i)_{i=1}^n$} will be an orthonormal system
with corresponding functionals
\mbox{$L_i(f) = \langle g_i,f\rangle_{\Hilbert^d}$},
so $A_n$ is an orthogonal projection.
For periodic functions, the information we use
consists of Fourier coefficients,
which is usually regarded as a natural type of information.

In the randomized setting we will consider linear methods as well.
We propose methods where we take the average over $n$ independent ``trials''
to fit the function~$f$,
\begin{equation*}
	A_n^{\omega}(f)
		:= \frac{1}{n} \sum_{i=1}^n L_i^{\omega}(f) \, g_i^{\omega} \,,
\end{equation*}
where $g_i^{\omega} \in L_{\infty}$ are i.i.d.\ realizations
of a Gaussian field,
and the $L_i^{\omega}$ are corresponding ``Gaussian'' functionals,
which -- in a way -- indicate how well the~$g_i^{\omega}$ fit
to the original function~$f$.
This method resembles the simple structure
of standard Monte Carlo integration~\eqref{eq:stdMC},
however, the type of linear information we use here is quite exotic.
For periodic functions one can put it like this:
All Fourier coefficients simultaneously have a random influence
on each of the functionals.

Details of this randomized approximation method will be presented in
Section~\ref{sec:plainMC}.
The original idea goes back to Math\'e 1991~\cite{Ma91}
who considered finite-dimensional sequence space embeddings
\mbox{$\ell_2^m \embed \ell_q^m$}, \mbox{$q > 2$},
as building blocks in order to study the speed of convergence
for the randomized $L_q$-approximation of functions from
one-dimensional Sobolev spaces via optimal linear methods.
Heinrich 1992~\cite{He92} continued this work,
providing nonlinear Monte Carlo approximation methods
for certain parameter settings of $d$-variate isotropic spaces.
Similar results for periodic spaces of dominating mixed smoothness
by Fang and Duan~\cite{FD07} 
still rely on Math\'e's sequence space result,
but for some cases that have been left open there,
modified techniques needed to be applied, see~\cite{BKN17}.
To summarize, for $L_q$-approximation of functions from a Sobolev space
with integrability index~$p$,
randomization turns out to speed up the convergence if~$\max\{2,p\} < q$.
The biggest gain was made for~$p=2$ and $q = \infty$,
i.e.\ uniform approximation of Hilbert space functions,
where we can gain a speed-up of $n^{-1/2}$
for the decay of the error $e^{\ran}(n,\Hilbert^d \embed L_{\infty})$
of optimal Monte Carlo algorithms,
compared to the best possible deterministic error
\mbox{$e^{\deter}(n,\Hilbert^d \embed L_{\infty})$},
aside from logarithmic terms.
This reminds us of classical integration problems
where Monte Carlo would usually give an additional rate~$n^{-1/2}$
compared to deterministic cubature rules,
see Bakhvalov 1959~\cite{Bakh59},
also Novak~\cite[Sections~1.3.8/9 and 2.2.9]{No88}.

The problem of uniform approximation of functions from Hilbert spaces
forms a benchmark for the power of randomisation in terms of convergence rates,
and thereby motivate the study of the $d$-dependent complexity
in this particular setting.

The paper is organized as follows.
In \secref{sec:Basics} we collect general tools for understanding $L_{\infty}$-approximation
in general Hilbert spaces.
Namely,
we present the algorithmic background of the upper Monte Carlo bound we use,
we futher recall some theory on reproducing kernel Hilbert spaces and Gaussian fields,
and also cite a technique for lower bounds in the deterministic setting.
In \secref{sec:HilbertPeriodic} we apply these tools to the particular situation
of tensor product spaces of periodic functions, especially Korobov spaces.

\begin{notation}
	For a real number~$a$ we put $a_+ := \max\{a,0\}$.
	For functions~$f,g$ on a common domain,
	the notion~$f \preceq g$ means that there exists a constant~$C > 0$
	such that~$f \leq C g$.
	The symbol~\mbox{$f \asymp g$} will be used as an abbreviation for
	\mbox{$f \preceq g \preceq f$},
	the notion $f \prec g$ means $f \preceq g$ but $g \not\preceq f$.
	Note that the implicit constants in this paper
	are typically independent from the dimension~$d$
	and from the error threshold $\eps$,
	but may depend on other parameters such as the smoothness~$r$.
\end{notation}

\section{General Tools}
\label{sec:Basics}

\subsection{A plain Monte Carlo upper bound}
\label{sec:plainMC}

The following result originates from Math{\'e}~\cite{Ma91}
and is a key component for the Monte Carlo approximation
of Hilbert space functions via linear information.
Here we keep it a little more general than in the original paper,
where the output space was a sequence space~$\ell_q^m$ with~\mbox{$q>2$}.
See also~\cite{BKN17} for a proof of the general setting,
modifications for the complex setting, and further discussions.

\begin{proposition}
\label{prop:Ma91_l2G}
	Let \mbox{$S:\ell_2^m \rightarrow G$} be a linear operator between normed spaces
	and consider the unit ball~\mbox{$B_2^m \subset \ell_2^m$} as the input set.
	Let the information mapping \mbox{$N^{\omega} = N$}
	be a random \mbox{$(n \times m$)}-Matrix with entries
	\mbox{$N_{ij} = n^{-1/2} \, X_{ij}$},
	where the $X_{ij}$ are
	independent standard Gaussian random variables.
	Then \mbox{$A_n^{\omega} := S N^{\top} N$}
	defines a linear rank-$n$ Monte Carlo method
	(\mbox{$\phi^{\omega} = S N^{\top}$}),
	and its error is bounded from above by
	\begin{equation*}
		e(A_n,S:\ell_2^m \rightarrow G)
			\leq \frac{2 \expect \|S \vecX\|_G}{\sqrt{n}}
	\end{equation*}
	where $\vecX$ is a standard Gaussian vector in $\R^m$.
\end{proposition}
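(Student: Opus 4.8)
The plan is to unwind the algorithm into an empirical average of i.i.d.\ $G$-valued terms, to symmetrise it with an independent copy, and then to apply a rotation that linearises the resulting quadratic expression so that a single conditional Gaussian computation closes the estimate with the sharp constant.

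First I would record the explicit form of the method. Writing the rows of $X$ as $x_1,\dots,x_n\in\R^m$, one has $N^{\top}N = n^{-1}\sum_{i=1}^n x_i x_i^{\top}$, so that
\[
	A_n^{\omega}(f) = S N^{\top} N f = \frac{1}{n}\sum_{i=1}^n \langle x_i, f\rangle\, S x_i .
\]
Since $\expect[x_i x_i^{\top}] = I_m$, the method is unbiased, $\expect A_n^{\omega}(f)=Sf$, and because the error is homogeneous of degree one in $f$ it suffices to bound $\expect\|Sf - A_n^{\omega}(f)\|_G = \expect\|\frac1n\sum_i Y_i\|_G$ for $\|f\|_2=1$, where $Y_i := \langle x_i,f\rangle S x_i - Sf$ are i.i.d.\ and centred. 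Introducing independent copies $x_1',\dots,x_n'$ and using $Sf = \expect[\langle x_i',f\rangle S x_i']$ together with Jensen's inequality (conditionally on the $x_i$) yields the symmetrisation
\[
	\expect\Big\| \tfrac1n \sum_{i=1}^n Y_i \Big\|_G
	\le \frac1n \, \expect\Big\| \sum_{i=1}^n \big( \langle x_i,f\rangle S x_i - \langle x_i',f\rangle S x_i' \big) \Big\|_G .
\]

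The key step is to decouple each difference $D_i := \langle x_i,f\rangle S x_i - \langle x_i',f\rangle S x_i'$. Setting $u_i := (x_i+x_i')/\sqrt2$ and $v_i := (x_i-x_i')/\sqrt2$, which are again i.i.d.\ standard Gaussian vectors, a short expansion of the two quadratic terms makes the diagonal self-interaction cancel and leaves the bilinear expression
\[
	D_i = \langle u_i, f\rangle\, S v_i + \langle v_i, f\rangle\, S u_i .
\]
By the triangle inequality and the distributional symmetry $(u_i,v_i)\stackrel{d}{=}(v_i,u_i)$, the right-hand side of the symmetrisation is at most $\frac2n\,\expect\|\sum_i \langle u_i,f\rangle S v_i\|_G$. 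I expect this rotation/decoupling to be the main obstacle, since it is exactly what removes the quadratic self-term that would otherwise contribute an unwanted summand in the direction $Sf$ and spoil the constant $2$.

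Finally I would condition on $u_1,\dots,u_n$. With the scalars $c_i:=\langle u_i,f\rangle$ frozen, $\sum_i c_i S v_i = S\big(\sum_i c_i v_i\big)$ is the image under $S$ of a centred Gaussian vector of covariance $\|c\|_2^2\,I_m$, so $\sum_i c_i S v_i \stackrel{d}{=} \|c\|_2\, S\vecX$ and $\expect_v\|\sum_i c_i S v_i\|_G = \|c\|_2\,\expect\|S\vecX\|_G$. Taking the expectation over $u$ and applying Jensen once more gives $\expect\|c\|_2 = \expect\big(\sum_i \langle u_i,f\rangle^2\big)^{1/2} \le \big(\sum_i \expect\langle u_i,f\rangle^2\big)^{1/2} = \sqrt n\,\|f\|_2 = \sqrt n$. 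Chaining the three displayed bounds then yields $\expect\|Sf - A_n^{\omega}(f)\|_G \le \frac{2}{\sqrt n}\,\expect\|S\vecX\|_G$, and taking the supremum over $\|f\|_2=1$ gives the claim.
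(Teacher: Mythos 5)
Your argument is correct, and it is worth noting that the paper itself does not prove this proposition at all: it defers to Math\'e~[Ma91] and to~[BKN17], so your write-up is a self-contained proof where the text offers only a citation. Every step checks out: the representation $A_n^{\omega}(f)=\frac1n\sum_{i=1}^n\langle x_i,f\rangle\,Sx_i$ with $\expect A_n^{\omega}(f)=Sf$; the symmetrisation with an independent copy (constant $1$, via Jensen applied to the conditional expectation, legitimate here since all random elements live in the finite-dimensional, hence complete, subspace $S(\R^m)\subseteq G$); the orthogonal change of variables $u_i=(x_i+x_i')/\sqrt2$, $v_i=(x_i-x_i')/\sqrt2$, which yields independent standard Gaussian pairs and cancels the diagonal terms exactly, leaving $D_i=\langle u_i,f\rangle Sv_i+\langle v_i,f\rangle Su_i$; the exchangeability of $(u_i,v_i)$ giving the factor $2$; and the conditional identity $\sum_i c_i v_i\overset{d}{=}\|c\|_2\,\vecX$ combined with $\expect\|c\|_2\le\sqrt{n}$. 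This recovers the stated constant $2$ exactly. The rotation/decoupling step is indeed the crux: the more naive decomposition $x_i=\langle x_i,f\rangle f+Px_i$ (splitting the error into a $\chi^2$-type fluctuation along $Sf$ plus a conditionally Gaussian remainder) handles the two contributions separately and only yields a constant like $1+\sqrt{\pi}$; your symmetrised bilinear form absorbs the self-interaction term and is the reason the clean constant survives. No gaps.
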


\begin{example}[Breaking the curse - a sequence space example]
	As a toy example of $d$-variate functions, consider
	functions that are defined on a Boolean domain, \mbox{$f :\{0,1\}^d \to \R$}.
	On the Boolean domain, $L_p$-spaces with respect to the counting measure
	translate into classical finite-dimensional sequence spaces~$\ell_p^{2^d}$,
	hence, the problem
	\begin{equation*}
		\ell_2^{2^d} \embed \ell_{\infty}^{2^d}
	\end{equation*}
	corresponds to ``uniform approximation in Hilbert spaces''.
	From Smolyak~\cite{Smo65} we extract
	\begin{equation*}
		n^{\deter}(\eps,\ell_2^{2^d} \embed \ell_{\infty}^{2^d})
			\geq (1-\eps^2) \, 2^d \,,
	\end{equation*}
	which implies the curse of dimensionality in the deterministic setting.
	The fundamental approximation method due to Math\'e~1991~\cite{Ma91}
	yields
	\begin{equation*}
		n^{\ran}(\eps,\ell_2^{2^d} \embed \ell_{\infty}^{2^d})
			\preceq \eps^{-2} \, d \,,
	\end{equation*}
	which proves polynomial tractability.
	(This result uses~$\expect \|\vecX\|_{\infty} \asymp \sqrt{1 + \log m}$
	for standard Gaussian vectors $\vecX \in \R^m$.)
	
	One could also study the general case
	\begin{equation*}
		\ell_p^{2^d} \embed \ell_q^{2^d} \,, \qquad 1 \leq p,q \leq \infty \,.
	\end{equation*}
	Much (but not everything) is known about deterministic approximation
	for this general problem, see~\cite{FR13CS,Ka81,Pie74,Smo65}.
	From Math\'e~\cite{Ma91} and Heinrich~\cite{He92}
	we obain linear and non-linear Monte Carlo methods,
	some parameter settings that have been left open there
	are easily filled via relations between different~$\ell_p$-norms.
	The lower Monte Carlo bounds from Heinrich~\cite{He92}
	will not give proper $d$-dependencies in most cases, unfortunately.
	See \cite[Section~3.2.2]{Ku17} for a detailed listing of all cases
	of sequence space embeddings,
	and discussions about lower bounds for Monte Carlo.\\
	The case~$p \geq q$ is fully understood, there is no way
	to reduce the initial error if we use $\prec 2^d$ pieces of information,
	even in the Monte Carlo setting.\\
	For $p \leq q$ the initial error is normalized,
	\mbox{$e(0,\ell_p^{2^d} \embed \ell_q^{2^d}) = 1$}.
	If $p = 1$ and $2 \leq q$, by methods related to compressed sensing
	we receive
	\begin{equation*}
		n^{\deter}(\eps,\ell_1^{2^d} \embed \ell_q^{2^d}) \preceq \eps^{-2} \, d \,,
	\end{equation*}
	so we already have polynomial tractability in the deterministic setting.
	For $1 < p$ and $\max\{p,2\} < q$, however,
	we can show the curse of dimensionality in the deterministic setting.
	It is only for~$1 < p \leq 2$ and $q = \infty$, though, that we know
	randomized approximation methods which break the curse.
	In detail, Math\'e's linear Monte Carlo method is already sufficient,
	even for~$1 < p < 2$, where nonlinear methods provide better results.
	
	This example is one more hint why it is promising
	to consider uniform approximation in Hilbert spaces
	in search for function approximation problems
	where Monte Carlo methods can break the curse.
\end{example}

We now put the fundamental Monte Carlo method from \propref{prop:Ma91_l2G}
to an extreme
and obtain a function approximation analogue
to standard Monte Carlo integration~\eqref{eq:stdMC}.
Here, we restrict the input set to functions from Hilbert spaces.
\lemref{lem:stdMCapp} stated below is still quite general,
its specification to $L_{\infty}$-approximation of functions
is the starting point for the study of Gaussian random fields
and their expected maximum, see \secref{sec:E|Psi|_sup}.

\begin{lemma} \label{lem:stdMCapp}
	Consider a linear problem with a compact solution operator
	\begin{equation*}
		S: \Hilbert \rightarrow G
	\end{equation*}
	from a separable Hilbert space~$\Hilbert$ into a Banach space~$G$.
	Assume that we have an orthonormal basis~\mbox{$(\psi_j)_{j \in \N}$}
	for~$\Hilbert$ such that
	the sum~\mbox{$\sum_{j=1}^{\infty} X_j \, S(\psi_j)$},
	with independent standard Gaussian random variables~\mbox{$X_j$},
	converges almost surely in~$G$.
	Then for~\mbox{$n \in \N$} we have
	\begin{equation*}
		e^{\ran}(n,S)
			\leq \frac{2 \expect \bigl\|\sum_{j=1}^{\infty}
														X_j \, S(\psi_j)
														\bigr\|_G
								}{\sqrt{n}} \,,
	\end{equation*}
	or equivalently, for~\mbox{$\eps > 0$},
	\begin{equation*}
		n^{\ran}(\eps,S)
			\leq \left\lceil
							4\, \left(\frac{\expect \bigl\|\sum_{j=1}^{\infty}
																	X_j \, S(\psi_j)
												\bigr\|_G
										}{\eps}
								\right)^2
					\right\rceil \,.
	\end{equation*}
\end{lemma}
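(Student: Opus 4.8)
The plan is to deduce this from \propref{prop:Ma91_l2G} by a finite-dimensional truncation followed by a limiting argument, where the compactness of $S$ is exactly what controls the truncation error. First I would fix $m \in \N$ and let $P_m$ be the orthogonal projection of $\Hilbert$ onto $V_m := \linspan\{\psi_1,\ldots,\psi_m\}$. Identifying $V_m$ with $\ell_2^m$ through the orthonormal system, the restriction $S|_{V_m}$ becomes a linear operator $S_m : \ell_2^m \to G$ with $S_m \vecc = \sum_{j=1}^m c_j\, S(\psi_j)$. Applying \propref{prop:Ma91_l2G} to $S_m$ produces, via Gaussian information $N^\omega$, a linear rank-$n$ Monte Carlo method $A_{n,m}^\omega = S_m N^\top N$ with
\begin{equation*}
	\sup_{\vecc \in B_2^m} \expect \bigl\| S_m \vecc - A_{n,m}^\omega(\vecc) \bigr\|_G
		\leq \frac{2\, \expect\bigl\| \sum_{j=1}^m X_j\, S(\psi_j) \bigr\|_G}{\sqrt n} \,.
\end{equation*}
Reading the information functionals as $L_i^\omega(f) = n^{-1/2}\sum_{j=1}^m X_{ij}\,\langle\psi_j,f\rangle_{\Hilbert}$, the very same prescription defines a legitimate Monte Carlo method for the full problem $S:\Hilbert\to G$; crucially it only sees the coefficients of $P_m f$.

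Next I would split the error of this method on the full unit ball. For $\|f\|_{\Hilbert}\le 1$ write $f = P_m f + (I-P_m)f$; since the method ignores the tail, $A_{n,m}^\omega(f) = A_{n,m}^\omega(P_m f)$, so
\begin{equation*}
	S f - A_{n,m}^\omega(f)
		= S(I-P_m)f + \bigl[ S_m\vecc(f) - A_{n,m}^\omega(\vecc(f)) \bigr] \,,
\end{equation*}
where $\vecc(f)$ is the coefficient vector of $P_m f$, which satisfies $\|\vecc(f)\|_{\ell_2^m} = \|P_m f\|_{\Hilbert} \le 1$. Taking the $G$-norm, then the expectation over $\omega$, and finally the supremum over $f$ yields
\begin{equation*}
	e^{\ran}(n,S)
		\leq \| S(I-P_m) \|_{\Hilbert\to G}
			+ \frac{2\, \expect\bigl\| \sum_{j=1}^m X_j\, S(\psi_j) \bigr\|_G}{\sqrt n} \,.
\end{equation*}

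It then remains to let $m\to\infty$. Because $S$ is compact and $P_m \to I$ strongly, the truncation term $\|S(I-P_m)\|_{\Hilbert\to G}$ tends to $0$, the strong convergence becoming uniform on the precompact image of the unit ball. For the Gaussian expectation, the identity $\sum_{j=1}^m X_j\, S(\psi_j) = \expect[\,\sum_{j=1}^{\infty} X_j\, S(\psi_j) \mid X_1,\ldots,X_m\,]$ combined with Jensen's inequality shows the partial-sum expectations to be nondecreasing and bounded above by $\expect\|\sum_{j=1}^{\infty} X_j\, S(\psi_j)\|_G$, which is finite by the assumed almost sure convergence together with the integrability of Gaussian sums; Fatou's lemma then forces convergence to exactly that limit. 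Passing to the limit gives the stated error bound, and solving $2\,\expect\|\cdot\|_G / \sqrt n \le \eps$ for $n$ delivers the equivalent complexity form. I expect the only genuine subtlety to be the vanishing of the truncation bias: it is precisely here that compactness of $S$ is indispensable, since \propref{prop:Ma91_l2G} on its own only governs the finite-dimensional, projected part of the error.
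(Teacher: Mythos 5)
Your proposal is correct and follows essentially the same route as the paper's own proof: truncate to $\linspan\{\psi_1,\ldots,\psi_m\}$, apply \propref{prop:Ma91_l2G} there, control the tail by $\|S(\id_{\Hilbert}-P_m)\|_{\Hilbert\to G}\to 0$ via compactness, and let $m\to\infty$. The only cosmetic difference is that you justify the monotone convergence of the partial-sum expectations by conditional expectation and Jensen's inequality, whereas the paper uses the equivalent explicit symmetrization over the sign of $X_{m+1}$.
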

\begin{proof}
	For~\mbox{$m \in \N$} we define the linear Monte Carlo method
	\mbox{$A_{n,m} = (A_{n,m}^{\omega})_{\omega}$}
	which, for an input \mbox{$f \in \Hilbert$},
	returns the output
	\begin{equation*}
		g = A_{n,m}^{\omega}(f)
			:= \frac{1}{n} \, \sum_{i=1}^n
						L_{i,m}^{\omega}(f) \, g_{i,m}^{\omega} \,,
	\end{equation*}
	based on the information
	\begin{equation*}
		y_i = L_{i,m}^{\omega}(f)
			=  \sum_{j=1}^m X_{ij} \, \langle \psi_j, f \rangle_{\Hilbert} \,,
	\end{equation*}
	and with elements from the output space
	\begin{equation*}
		g_{i,m}^{\omega} := \sum_{j=1}^m X_{ij} \, S(\psi_j) \,.
	\end{equation*}
	Here, the~$X_{ij}$ are independent standard Gaussian random variables.
	This algorithm is actually the fundamental Monte Carlo method
	from \propref{prop:Ma91_l2G}
	when restricting~$S$
	to	the subspace~\mbox{$\Hilbert_m := \linspan\{\psi_1,\ldots,\psi_m\}$}
	which can be identified with~$\ell_2^m$.
	Let~$P_m$ denote the orthogonal projection onto~$\Hilbert_m$.
	Compactness of~$S$ provides
	\begin{equation*}
		\|S \, (\id_{\Hilbert} - P_m) \|_{\Hilbert \rightarrow G}
			\xrightarrow[m \rightarrow \infty]{} 0 \,.
	\end{equation*}
	Then, for inputs~$f$ with \mbox{$\|f\|_{\Hilbert} \leq 1$},
	employing \propref{prop:Ma91_l2G} we have
	\begin{align*}
		e((A_{n,m}^{\omega})_{\omega},f)
			&\leq \|S \, (\id_{\Hilbert} - P_m) \, f \|_G
						+ e((A_{n,m}^{\omega})_{\omega}, P_m f) \\
			&\leq \|S(\id_{\Hilbert} - P_m) \|_{\Hilbert \rightarrow G}
				+ \frac{2}{\sqrt{n}}
						\, \expect\biggl\|\sum_{j=1}^m
																X_j \, S(\psi_j)
											\biggr\|_G\\
			&\xrightarrow[m \rightarrow \infty]{}
				\frac{2}{\sqrt{n}}
					\, \expect\biggl\|\sum_{j=1}^{\infty}
																X_j \, S(\psi_j)
										\biggr\|_G\,.
	\end{align*}
	Note that~\mbox{$\expect\left\|\sum_{j=1}^m X_j \, S(\psi_j) \right\|_G$}
	is monotonically increasing in~$m$,
	since the~$X_j$ are independent and centrally symmetric.
	In detail,
	\begin{align*}
		\expect\left\|\sum_{j=1}^m X_j \, S(\psi_j) \right\|_G
			&\stackrel{\text{$\Delta$-ineq.}}{\leq}
				\frac{1}{2} \sum_{\sigma = \pm 1}
					\left\|\sum_{j=1}^m X_j \, S(\psi_j) + \sigma X_{m+1} S(\psi_{m+1})
					\right\|_G \\
			&\stackrel{\phantom{\text{$\Delta$-ineq.}}}{=}
				\expect\left\|\sum_{j=1}^{m+1} X_j \, S(\psi_j) \right\|_G \,.
	\end{align*}
\end{proof}

\begin{remark}[Stochastically bounded information and algorithms]
	\label{rem:Lstoch}
	With the above lemma it seems natural to consider
	the idealized method~\mbox{$A_n = A_{n,\infty}$},
	\begin{equation*}
		A_n(f)
			:= \frac{1}{n} \, \sum_{i=1}^n L_i^{\omega}(f) \, g_i^{\omega}  \,,
	\end{equation*}
	with information
	\begin{equation*}
		y_i = L_i^{\omega}(f)
			:= \sum_{j=1}^{\infty} X_{ij} \, \langle \psi_j,f \rangle_{\Hilbert} \,,
	\end{equation*}
	where~$X_{ij}$ are i.i.d. standard Gaussian,
	and elements from the output space 
	\begin{equation*}
		g_i^{\omega} := \sum_{j=1}^{\infty} X_{ij} \, S(\psi_j) \,.
	\end{equation*}
	Observe the similarities
	with standard Monte Carlo integration~\eqref{eq:stdMC}.
	Observe also the important difference that the present approximation method
	depends on the particular norm of the input space,
	whereas standard Monte Carlo integration is defined
	independently from the input set.
	
	Note that, almost surely, $L_i^{\omega}$~is an unbounded functional.
	To see this,
	for fixed~$\omega$, consider the sequence~\mbox{$(f_{ik})_{k=1}^{\infty}$}
	of normalized Hilbert space elements
	\begin{equation*}
		f_{ik} := \frac{1}{\sqrt{\sum_{j=1}^k X_{ij}^2}}
						\, \sum_{j=1}^k X_{ij} \, \psi_j
			\in \Hilbert \,,
	\end{equation*}
	where
	\begin{equation*}
		L_i^{\omega}(f_{ik}) = \sqrt{\sum_{j=1}^k X_{ij}^2}
			\xrightarrow[k \rightarrow \infty]{\text{a.s.}} \infty \,.
	\end{equation*}
	This can be shown via the Borel-Cantelli lemma,
	knowing the expected value \mbox{$\expect L_i^{\omega}(f_{ik})^2 = k$}
	and the growing variance of that expression.
	
	Specifically for approximation problems~%
	\mbox{$S = \App: \Hilbert \hookrightarrow G$},
	the functions~$g_i^{\omega}$, and therefore the output as well,
	are functions defined on the same domain as the input functions,
	but they are \emph{not} from the original Hilbert space~$\Hilbert$
	(for the same reasons
	that cause the functionals~\mbox{$L_i^{\omega}$} to be discontinuous).
	This shows in a very drastic way
	that the fundamental Monte Carlo approximation method is non-interpolatory.
	Yet the functions~$g_i^{\omega}$
	correspond to the information functionals~$L_i^{\omega}$,
	similarly to Hilbert space elements representing continuous linear functionals
	according to the Riesz representation theorem.
	
	Although the functionals~$L_i^{\omega}$ are almost surely discontinuous,
	for any fixed input~\mbox{$f \in \Hilbert$}
	the random information~\mbox{$L_i^{\omega}(f)$}
	is a standard Gaussian random variable with variance~$\|f\|_{\Hilbert}^2$,
	hence almost surely finite.
	To show almost sure convergence of the random series 
	by which we define~\mbox{$L_i^{\omega}(f)$}, however,
	requires some theory of Gaussian processes.
	The sequence of partial sums
	\mbox{$L_{i,m}^{\omega}(f)
						= \sum_{j=1}^m X_{ij} \, \langle \psi_j,f \rangle_{\Hilbert}$},
	is distributed like the collection of values~$B_{t_m}$
	of the standard Brownian motion~$B$
	at points~\mbox{$t_m := \sum_{j=1}^m \langle \psi_j,f \rangle_{\Hilbert}^2$}.
	Almost sure convergence of the random series follows from
	continuity of the Brownian motion and the limit
	\mbox{$t := \lim_{m \to \infty} t_m = \|f\|_{\Hilbert}^2$}.
	Of course, almost sure convergence implies convergence in distribution,
	so~$L_i^{\omega}(f)$ indeed is normally distributed.
	
	Moreover, the value of this functional is independent
	from the basis representation, so it is well-defined in a stochastic sense.
	Let~$(\varphi_j)_{j \in \N}$ be another orthonormal basis
	of the Hilbert space~$\Hilbert$.
	We compare the two Gaussian processes living on~$\N$,
	\begin{align*}
		L_m &:= \sum_{i=1}^m X_i \, \langle \psi_i, f \rangle_{\Hilbert} \,,
			\qquad X_i \stackrel{\text{iid}}{\sim} \Normal(0,1) \,, \\
		L_m' &:= \sum_{j=1}^m Y_i \, \langle \varphi_j, f \rangle_{\Hilbert} \,,
			\qquad Y_j := \sum_{i=1}^{\infty}
											X_i \, \langle \psi_i, \varphi_j \rangle_{\Hilbert}
								\stackrel{\text{iid}}{\sim} \Normal(0,1) \,.
	\end{align*}
	We aim to show for the limit $m\to\infty$
	that~$L_{\infty} = L_{\infty}'$ almost surely.
	Due to the almost sure convergence of each of the processes,
	if suffices to show that
	\begin{equation*}
		\expect (L_{\infty} - L_{\infty}')^2
			= \lim_{m \to \infty} \expect (L_m - L_m')^2
			= 0 \,.
	\end{equation*}
	The definition of the random variables $Y_j$ already
	relies on almost sure convergence of the Gaussian functional
	applied to~$\varphi_j$, hence,
	\begin{align*}
		\expect Y_j Y_k
			&\,=\, \lim_{m \to \infty}
					\expect
						\sum_{i=1}^m X_i \, \langle \psi_i, \varphi_j \rangle_{\Hilbert} \,
						\sum_{l=1}^m X_l \, \langle \psi_l, \varphi_k \rangle_{\Hilbert} \\
			&\,=\, \lim_{m \to \infty}
							\sum_{i=1}^m \langle \psi_i, \varphi_j \rangle_{\Hilbert}
													\langle \psi_i, \varphi_k \rangle_{\Hilbert} \\
			&\,=\, \langle \varphi_j,\varphi_k \rangle_{\Hilbert} \,,	
	\end{align*}
	so indeed, we have~\mbox{$Y_j \stackrel{\text{iid}}{\sim} \Normal(0,1)$}.
	A similar calculation shows
	\mbox{$\expect X_i Y_j = \langle \psi_i, \varphi_j \rangle_{\Hilbert}$}.
	From this we have
	\begin{align*}
		\expect (L_m - L_m')^2
			&\,=\, \sum_{i=1}^m \langle \psi,f \rangle_{\Hilbert}^2
							- 2 \sum_{i,j=1}^m \langle \psi_i,f \rangle_{\Hilbert}
																	\langle \psi_i, \varphi_j \rangle_{\Hilbert}
																	\langle \varphi_j, f \rangle_{\Hilbert}
							+ \sum_{j=1}^m \langle \varphi_j, f \rangle_{\Hilbert}^2 \\
			&\,=\, \left\| \sum_{i=1}^m \langle \psi_i,f \rangle_{\Hilbert} \, \psi_i
											- \sum_{j=1}^m
														\langle \varphi_j,f \rangle_{\Hilbert} \, \varphi_j
							\right\|_{\Hilbert}^2 \\
			&\xrightarrow[m \to \infty]{} \|f - f\|_{\Hilbert}^2
			\,=\, 0 \,.
	\end{align*}
	
	Since in addition, by assumption,
	the $g_i^{\omega}$~are almost surely defined,
	we have a method which is almost surely defined for any fixed~$f$.
	Even more, for any fixed~\mbox{$f \in \Hilbert$},
	the idealized algorithm~$A_n$
	can be approximated with almost sure convergence,
	\begin{equation*}
		A_{n,m}^{\omega}(f)
			\xrightarrow[m \rightarrow \infty]{\text{a.s.}}
				A_n^{\omega}(f) \,.
	\end{equation*}
	
	These considerations motivate to extend the class
	of admissible information functionals from continuous linear functionals
	to some class of ``stochastically bounded'' functionals~$\Lambda^{\rm stoch}$.
	Actually, this kind of stochastically defined functionals is quite common.
	For example, the problem of integrating $L_p$-functions by function values
	is only solvable in the randomized setting
	since in that case function evaluations are discontinuous.
	Compare also the example from Heinrich and Milla~\cite{HeM11}.
	
	Still, discontinuous functionals are not needed in principle in our situation.
	They are, however, the outcome of an idealization of methods~$A_{n,m}$.
	These use continuous functionals which
	behave nicely for any input function,
	despite the fact that they bear a huge norm on average.
	This is in accordance with the statements made in~\cite{HNW13},
	where the authors show for certain examples of continuous operators
	that discontinuous linear information
	is not more powerful than continuous information,
	supporting the believe that this should always be the case.
\end{remark}

\subsection{Reproducing kernel Hilbert spaces}
\label{sec:RKHS}

We summarize several facts about \emph{reproducing kernel Hilbert spaces} (RKHS)
that are necessary for the numerical analysis of approximation problems
\begin{equation*}
	\App: \Hilbert \hookrightarrow L_{\infty}(D) \,,
\end{equation*}
with~\mbox{$\Hilbert$} being a separable Hilbert space
of functions defined on a domain~\mbox{$D \subset \R^d$}.
For a general introduction to reproducing kernels,
refer to Aronszajn~\cite{Ar50}.
For an introduction with focus on the associated Gaussian field,
see Adler~\cite[Section~III.2]{Adl90}. 
The theory of RKHSs is a powerful concept
for the analysis of many other numerical settings,
e.g.~for certain average case problems
(see for instance Ritter~\cite[Chapter~III]{Rit00}),
or when information from function values is considered
(see Novak and Wo\'zniakowski~\cite{NW10,NW12} for a bunch of examples),
it also proves useful for statistical problems (see Wahba~\cite{Wah90}).


We assume function evaluations to be continuous on~\mbox{$\Hilbert$}.
Then by the Riesz representation theorem, for each~\mbox{$\vecx \in D$}
there exists a unique function~\mbox{$K_{\vecx}(\cdot) \in \Hilbert$}
such that for~\mbox{$f \in \Hilbert$} we have
\begin{equation*}
	f(\vecx) = \langle K_{\vecx},f \rangle_{\Hilbert} \,.
\end{equation*}
For~\mbox{$\vecx,\vecy \in D$} we define a \emph{symmetric} function
\begin{equation*}
	K(\vecx,\vecy)
		:= K_{\vecy}(\vecx)
		= \langle K_{\vecx},K_{\vecy} \rangle_{\Hilbert}
		= \langle K_{\vecy},K_{\vecx} \rangle_{\Hilbert}
		= K_{\vecx}(\vecy) = K(\vecy,\vecx) \,.
\end{equation*}
This function is called the \emph{reproducing kernel} of~\mbox{$\Hilbert$},
which is then called
\emph{reproducing kernel Hilbert space}~\mbox{$\Hilbert(K)$}.
There is a simple representation for the kernel,
namely, given an orthonormal basis~\mbox{$(\psi_i)_{i \in \N}$} 
of~\mbox{$\Hilbert(K)$}, we can write
\begin{equation} \label{eq:k=sumpsi}
	K(\vecx,\vecy)
		= \sum_{i = 1}^{\infty} \psi_i(\vecx) \, \psi_i(\vecy) \,.
\end{equation}


Knowing the kernel, it is easy to estimate the $\sup$-norm
of normalized functions~\mbox{$f \in \Hilbert$}.
Indeed, with \mbox{$\|f\|_{\Hilbert} = 1$} we get
\begin{multline} \label{eq:|f|sup,K}
	\|f\|_{\sup}
		= \sup_{\vecx \in D} |f(\vecx)|
		= \sup_{\vecx \in D} \langle K_{\vecx}, f \rangle_{\Hilbert}\\
		\leq \sup_{\vecx \in D} \|K_{\vecx}\|_{\Hilbert}
		= \sup_{\vecx \in D} \sqrt{\langle K_{\vecx}, K_{\vecx} \rangle_{\Hilbert}}
		= \sup_{\vecx \in D} \sqrt{K(\vecx,\vecx)} \,.
\end{multline}
This is the initial error for the uniform approximation problem.
By the Cauchy-Schwarz inequality, the $\sup$-norm of $K$
is determined by values~\mbox{$K(\vecx,\vecx)$}
on the diagonal of~\mbox{$D \times D$},
\begin{equation*}
	\sup_{\vecx,\vecy \in D} |K(\vecx,\vecy)|
		= \sup_{\vecx \in D} K(\vecx,\vecx) \,.
\end{equation*}
Therefore from now on we assume the kernel~$K$ to be bounded.


We consider the \emph{canonical metric}
\mbox{$d_K: D \times D \rightarrow [0,\infty)$} associated to~$\Hilbert(K)$,
\begin{equation*}
	d_K(\vecx,\vecy)	
		:= \|K_{\vecx} - K_{\vecy}\|_{\Hilbert}
		= \sqrt{K(\vecx,\vecx) - 2 \, K(\vecx,\vecy) + K(\vecy,\vecy)} \,.
\end{equation*}
(If~$d_K(\vecx,\vecy) = 0$ for some distinct~{$\vecx \not= \vecy$},
we only have a semimetric. Then we still obtain
a metric for the set of equivalence classes of points that are at distance~$0$.)
Functions~\mbox{$f \in \Hilbert$} are Lipschitz continuous
with Lipschitz constant~\mbox{$\|f\|_{\Hilbert}$}
with respect to the canonical metric,
\begin{equation*}
	|f(\vecx) - f(\vecy)|
		= |\langle K_{\vecx} - K_{\vecy}, f \rangle_{\Hilbert}|
		\leq \|K_{\vecx} - K_{\vecy} \|_{\Hilbert} \, \|f\|_{\Hilbert}
		= \|f\|_{\Hilbert} \, d_K(\vecx,\vecy) \,.
\end{equation*}
Hence functions from~$\Hilbert$
are continuous with respect to any metric~$\delta$ on~$D$
that is topologically equivalent to the canonical metric~$d_K$.
Since we assume~$K$ to be bounded,
the domain~$D$ is bounded with respect to~$d_K$,
\begin{equation*}
	\diam(D) = \sup_{\vecx,\vecy \in D} d_K(\vecx,\vecy)
		\leq 2 \, \sup_{\vecx \in D} \sqrt{K(\vecx,\vecx)}\,.
\end{equation*}


Tensor products constitute a common way in IBC to define multivariate problems,
compare for instance Novak and Wo\'zniakowski~\cite[Section~5.2]{NW08},
or Ritter~\cite[Section~VI.2]{Rit00},
find many more examples in Novak and Wo\'zniakowski~\cite{NW10,NW12}.
Let~\mbox{$\Hilbert(K_1)$} and \mbox{$\Hilbert(K_2)$}
be reproducing kernel Hilbert spaces
defined on~$D_1$ and $D_2$, respectively.
Let~\mbox{$(\varphi_i)_{i \in \N}$} and~\mbox{$(\psi_j)_{j \in \N}$}
be corresponding orthonormal bases.
Then the tensor product space~%
\mbox{$\Hilbert(K_1) \otimes \Hilbert(K_2)$}
is the Hilbert space with orthonormal basis~%
\mbox{$(\varphi_i \otimes \psi_j)_{i,j \in \N}$}.
Here, \mbox{$f_1 \otimes f_2$} denotes the tensor product of functions~%
\mbox{$f_1 \in \Hilbert(K_1)$} and \mbox{$f_2 \in \Hilbert(K_2)$},
\begin{equation*}
	[f_1 \otimes f_2](\vecx_1,\vecx_2)
	:= f_1(\vecx_1) \, f_2(\vecx_2)\,,
	\quad\text{defined for $(\vecx_1,\vecx_2) \in D_1 \times D_2$.}
\end{equation*}
With another tensor product function~\mbox{$g_1 \otimes g_2$} of this sort,
one readily obtains
\begin{equation*}
	\langle f_1 \otimes f_2,
	g_1 \otimes g_2
	\rangle_{\Hilbert1 \otimes \Hilbert_2}
	= \langle f_1,g_1 \rangle_{\Hilbert_1}
	\, \langle f_2,g_2 \rangle_{\Hilbert_2} \,.
\end{equation*}
Using the representation~\eqref{eq:k=sumpsi}, 
it is easy to see that the reproducing kernel~$K$ of the new space
is the tensor product of the kernels of the original spaces,
\begin{equation*}
	K((\vecx_1,\vecx_2),(\vecy_1,\vecy_2))
	:= K_1(\vecx_1,\vecy_1) \, K_2(\vecx_2,\vecy_2) \,,
\end{equation*}
where~\mbox{$(\vecx_1,\vecx_2),(\vecy_1,\vecy_2) \in D_1 \times D_2$}.

\subsection{Expected maximum of zero-mean Gaussian fields}
\label{sec:E|Psi|_sup}

We discuss zero-mean Gaussian fields and their connection to
reproducing kernel Hilbert spaces.
All results collected here can be found in the notes by Adler~\cite{Adl90}.
For some results, Lifshits~\cite{Lif95} or Ledoux and Talagrand~\cite{LT91}
will also be good references.


Let \mbox{$(\psi_i)_{i \in \N}$} be an orthonormal basis
of a reproducing kernel Hilbert space~\mbox{$\Hilbert(K)$}.
Then the pointwise definition
\begin{equation} \label{eq:Psi=sum(X*psi)}
	\Psi_{\vecx} := \sum_{i=1}^{\infty} X_i \, \psi_i(\vecx) \,,
\end{equation}
with $X_i$ being i.i.d.\ standard Gaussian random variables,
produces a random function~$\Psi$ defined on~$D$
where the covariance function is the kernel~$K$,
\begin{equation*}
	\Cov(\Psi_{\vecx},\Psi_{\vecy})
		= \sum_{i,j=1}^{\infty}
				(\expect X_i \, X_j)
					\, \psi_i(\vecx) \, \psi_j(\vecy)
		= \sum_{i = 1}^{\infty}
				\psi_i(\vecx) \, \psi_i(\vecy)
		= K(\vecx,\vecy) \,.
\end{equation*}
Definition~\eqref{eq:Psi=sum(X*psi)} is actually the outcome
of applying the Gaussian functional from \remref{rem:Lstoch}
to the kernel functions~\mbox{$K_{\vecx} \in \Hilbert(K)$}.
It turns out that this definition is pointwise almost surely convergent
and independent from the choice of the orthonormal basis in~$\Hilbert(K)$.


Note that for the \emph{canonical metric}~\mbox{$d_K$}
associated to the reproducing kernel Hilbert space~\mbox{$\Hilbert(K)$}
we have the alternative representation
\begin{equation*}
	d_K(\vecx,\vecy)
		= \sqrt{\expect (\Psi_{\vecx} - \Psi_{\vecy})^2} \,.
\end{equation*}
From now on we assume that the domain~$D$ is \emph{totally bounded},
that is, for any~\mbox{$r > 0$}
the set~$D$ can be covered by finitely many balls with radius~$r$.
(If~$D$ is complete with respect to~$d_K$,
this implies compactness of~$D$.
Conversely, compactness of a metric space implies total boundedness.)
In that case,
for Gaussian fields,
continuity of~$\Psi$ with respect to the canonical metric~$d_K$
is equivalent to boundedness, see Adler~\cite[Theorem~4.16]{Adl90}.
Further, if~$\Psi$ is almost surely continuous,
then we have almost sure uniform convergence
of the series~\eqref{eq:Psi=sum(X*psi)} on~$D$
(that is, with respect to the $L_{\infty}$-norm),
see Adler~\cite[Theorem~3.8]{Adl90}.

One method for estimating the maximum of a Gaussian field
is based on \emph{metric entropy}. 
For~\mbox{$r > 0$}, let \mbox{$N(r) = N(r,D,d_K)$},
denote the minimal number of $d_K$-balls with radius~$r$
needed to cover~$D$.
The function~\mbox{$H(r) := \log N(r)$}
is called the \emph{(metric) entropy} of~$D$.
The following inequality is based on this quantity,
it goes back to Dudley 1973~\cite[Theorem~2.1]{Dud73}.
\begin{proposition}[Dudley] \label{prop:Dudley}
	There exists a universal constant~\mbox{$C_{\textup{Dudley}} > 0$} such that
	\begin{equation*}
		\expect \sup_{\vecx \in D} \Psi_{\vecx}
			\leq C_{\textup{Dudley}}
							\, \int_0^{\infty}
											\sqrt{\log N(r)}
										\dint r \,.
	\end{equation*}
\end{proposition}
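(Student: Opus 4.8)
The plan is to prove the bound by the \emph{chaining} technique: we control $\sup_{\vecx \in D}\Psi_{\vecx}$ through a cascade of successively finer approximations of the index set~$D$ with respect to the canonical metric~$d_K$, estimating each refinement by a maximal inequality for finitely many Gaussians.

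First I would reduce to a finite index set. Since~$D$ is totally bounded and, under the standing assumptions, $\Psi$~is almost surely continuous (equivalently bounded), the quantity $\sup_{\vecx \in D}\Psi_{\vecx}$ is the increasing limit of suprema over finite subsets, so by monotone convergence it suffices to bound $\expect \sup_{\vecx \in T}\Psi_{\vecx}$ uniformly over finite sets~$T \subset D$; fix such a~$T$. Next I would set up nested nets at dyadic scales: writing $\Delta := \diam(D)$ and $r_k := 2^{-k}\,\Delta$ for integers~$k \geq 0$, I choose for each~$k$ a minimal $r_k$-net~$T_k \subset D$, so that $\card T_k = N(r_k)$, together with a nearest-point map $\pi_k \colon T \to T_k$ satisfying $d_K(\vecx,\pi_k(\vecx)) \leq r_k$. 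For $k=0$ one point suffices (so $N(r_0)=1$), while $\pi_k(\vecx) \to \vecx$ as $k \to \infty$.

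The heart of the argument is the telescoping identity
\[
	\Psi_{\vecx} - \Psi_{\pi_0(\vecx)}
		= \sum_{k=1}^{\infty}
				\bigl(\Psi_{\pi_k(\vecx)} - \Psi_{\pi_{k-1}(\vecx)}\bigr) \,,
\]
which converges almost surely by continuity of~$\Psi$. Each increment is a centred Gaussian of standard deviation $d_K(\pi_k(\vecx),\pi_{k-1}(\vecx)) \leq r_k + r_{k-1} = 3\,r_k$, and at the $k$-th level there are at most $\card T_k \cdot \card T_{k-1} \leq N(r_k)^2$ distinct increments. Since $\expect \Psi_{\pi_0(\vecx)} = 0$, taking suprema over~$\vecx$ and then expectations gives
\[
	\expect \sup_{\vecx \in T} \Psi_{\vecx}
		\leq \sum_{k=1}^{\infty}
					\expect \max_{\vecx \in T}
						\bigl(\Psi_{\pi_k(\vecx)} - \Psi_{\pi_{k-1}(\vecx)}\bigr) \,,
\]
where each maximum ranges over at most $N(r_k)^2$ distinct values. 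Here I invoke the elementary fact that the maximum of~$M$ centred Gaussians, each of variance at most~$\sigma^2$, has expectation at most $\sigma\sqrt{2\log M}$; applied with $\sigma = 3 r_k$ and $M = N(r_k)^2$ this bounds the $k$-th summand by a constant multiple of $r_k \sqrt{\log N(r_k)}$.

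It remains to compare the dyadic sum $\sum_{k \geq 1} r_k \sqrt{\log N(r_k)}$ with the entropy integral. Since $r \mapsto \sqrt{\log N(r)}$ is non-increasing and $r_k - r_{k+1} = r_k/2$, on the interval $[r_{k+1},r_k]$ the integrand is at least $\sqrt{\log N(r_k)}$, whence $r_k \sqrt{\log N(r_k)} \leq 2 \int_{r_{k+1}}^{r_k} \sqrt{\log N(r)} \dint r$; summing over~$k$ telescopes to $2\int_0^{\Delta}\sqrt{\log N(r)}\dint r$, and as $N(r)=1$ for $r \geq \Delta$ this equals the full integral $\int_0^{\infty}$. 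Collecting constants yields the claim with some universal $C_{\textup{Dudley}}$. The main obstacle is not any single estimate but the bookkeeping that makes the reduction clean: ensuring the chain $\pi_k(\vecx) \to \vecx$ and the telescoping series converge almost surely (this is exactly where almost sure continuity of~$\Psi$ enters), and carrying out the discrete-to-integral comparison with the correct monotonicity so that no logarithmic factor is lost.
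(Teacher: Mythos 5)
Your chaining argument is correct, but note that the paper does not prove this statement at all: it quotes Dudley's inequality and points to Dudley~\cite{Dud73}, to Lifshits~\cite[Section~14, Theorem~1]{Lif95} for a direct proof with the explicit constant $4\sqrt{2}$, and to Adler~\cite[Corollary~4.15]{Adl90}, where it is derived from Fernique's estimate. What you have written is essentially the standard direct proof in the style of Lifshits -- dyadic nets, telescoping increments, the Gaussian maximal inequality $\expect\max_{i\leq M} Z_i \leq \sigma\sqrt{2\log M}$, and the comparison of the dyadic sum with the entropy integral -- and all the individual estimates check out ($d_K(\pi_k(\vecx),\pi_{k-1}(\vecx))\leq 3r_k$, at most $N(r_k)^2$ increments per level, $r_k\sqrt{\log N(r_k)}\leq 2\int_{r_{k+1}}^{r_k}\sqrt{\log N(r)}\dint r$). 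The one point to be careful about is the infinite telescoping series: you justify its almost sure convergence by invoking almost sure continuity of $\Psi$ as a standing assumption, which is consistent with the paper's framework (continuity is assumed there, and is equivalent to boundedness for Gaussian fields on totally bounded domains), but in a self-contained proof one would either observe that the statement is vacuous when the entropy integral diverges and that convergence of the chain is itself a by-product of the chaining estimate when it converges, or simply take the nets $T_k$ inside the finite set $T$ so that $\pi_k=\id_T$ for large $k$ and the telescoping sum is finite; the latter removes any appearance of circularity at no cost. With that cosmetic fix your argument is a complete and correct proof of the cited proposition.
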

For a direct proof with explicit numerical bound~%
\mbox{$C_{\textup{Dudley}} \leq 4 \sqrt{2}$},
see Lifshits~\cite[Section~14, Theorem~1]{Lif95}.
In the book of Adler~\cite[Corollary~4.15]{Adl90}
Dudley's inequality was derived from Fernique's estimate,
which in its own right is a suitable tool for estimating
the expected maximum of Gaussian fields,
see Adler~\cite[Theorem~4.1]{Adl90}.

It is well known that covering numbers~$N(r)$ can be estimated from above
via packing numbers and volume estimates of balls.
Let \mbox{$B_K(\vecx,r)$} denote the closed $d_K$-ball 
around~\mbox{$\vecx \in D$}.
For any probability measure~$\mu$ on $D$ we have
\begin{equation*}
	N(r) \leq \sup_{\vecx \in D} 1/\mu(B_K(\vecx,r/2)) \,.
\end{equation*} 
Plugging this into \propref{prop:Dudley} (Dudley),
after a change of variables we can state
\begin{equation} \label{eq:Dudley2}
	\expect \sup_{\vecx \in D} \Psi_{\vecx}
		\leq 2 \, C_{\textup{Dudley}}
						\, \int_0^{\infty}
										\sup_{\vecx \in D} \sqrt{\log (1/\mu(B_{K}(\vecx,r)))}
									\dint r \,.
\end{equation}

Since we are interested in the expected $\sup$-norm of~$\Psi$,
we also need the following elementary lemma,
compare Adler~\cite[Lemma~3.1]{Adl90}.
\begin{lemma} \label{lem:E|X|<init+EsupX}
	For the Gaussian field~$\Psi$ with covariance function~$K$,
	we have
	\begin{equation*}
		\expect \|\Psi\|_{\infty}
			\leq \sqrt{\frac{2}{\pi}}
							\, \inf_{\vecx \in D} \sqrt{K(\vecx,\vecx)}
						+ 2 \expect \sup_{\vecx \in D} \Psi_{\vecx} \,.
	\end{equation*}
\end{lemma}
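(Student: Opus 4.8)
The plan is to reduce the two-sided supremum $\|\Psi\|_{\infty} = \sup_{\vecx \in D} |\Psi_\vecx|$ to the one-sided quantity $\expect \sup_{\vecx \in D} \Psi_\vecx$ by fixing a reference point and exploiting the symmetry of the field. First I would fix an arbitrary $\vecx_0 \in D$ and record the elementary fact that, since $\Psi_{\vecx_0} \sim \Normal(0, K(\vecx_0,\vecx_0))$, its mean absolute value equals $\expect |\Psi_{\vecx_0}| = \sqrt{2/\pi}\,\sqrt{K(\vecx_0,\vecx_0)}$. This is where the first summand of the bound originates, and taking the infimum over $\vecx_0$ at the very end will produce the stated $\inf_{\vecx} \sqrt{K(\vecx,\vecx)}$.

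The core step is a decomposition around $\vecx_0$. For every $\vecx$ the triangle inequality gives $|\Psi_\vecx| \le |\Psi_{\vecx_0}| + |\Psi_\vecx - \Psi_{\vecx_0}|$, and since $\Psi_\vecx - \Psi_{\vecx_0} \le M^+ := \sup_{\vecx} (\Psi_\vecx - \Psi_{\vecx_0})$ while $\Psi_{\vecx_0} - \Psi_\vecx \le M^- := \sup_{\vecx} (\Psi_{\vecx_0} - \Psi_\vecx)$, with both $M^{\pm} \ge 0$ (the point $\vecx_0$ itself being admissible), I obtain $\sup_{\vecx} |\Psi_\vecx| \le |\Psi_{\vecx_0}| + M^+ + M^-$. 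Taking expectations, the one-sided suprema are handled by pulling the additive constant $\Psi_{\vecx_0}$ out of the supremum: $\expect M^+ = \expect \sup_{\vecx} \Psi_\vecx - \expect \Psi_{\vecx_0} = \expect \sup_{\vecx} \Psi_\vecx$, because $\expect \Psi_{\vecx_0} = 0$. Likewise $\expect M^- = \expect \sup_{\vecx}(-\Psi_\vecx)$, and here I would invoke the symmetry of the field: $-\Psi$ is again a zero-mean Gaussian field with covariance $K$, hence has the same law as $\Psi$, so $\expect \sup_{\vecx}(-\Psi_\vecx) = \expect \sup_{\vecx} \Psi_\vecx$. Combining, $\expect M^+ + \expect M^- = 2\,\expect \sup_{\vecx} \Psi_\vecx$, which yields $\expect \|\Psi\|_{\infty} \le \sqrt{2/\pi}\,\sqrt{K(\vecx_0,\vecx_0)} + 2\,\expect \sup_{\vecx} \Psi_\vecx$; the infimum over $\vecx_0$ then finishes the argument.

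I do not expect a serious obstacle, as the estimate is essentially a symmetrization identity, but a few points need care. The measurability and almost-sure finiteness of the suprema should be addressed first: by separability of $\Hilbert(K)$ and the continuity of $\Psi$ in the canonical metric $d_K$ established earlier for totally bounded $D$, the suprema may be taken over a countable dense subset, so they are genuine random variables, and if $\expect \sup_{\vecx} \Psi_\vecx = \infty$ the inequality is vacuous, so I may assume it finite. The only genuinely structural ingredient is the distributional symmetry $-\Psi \stackrel{\textup{d}}{=} \Psi$, which I would justify from the series representation $\Psi_\vecx = \sum_{i} X_i \, \psi_i(\vecx)$ together with the symmetry $X_i \stackrel{\textup{d}}{=} -X_i$ of the standard Gaussian coefficients. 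Everything else reduces to the triangle inequality, linearity of expectation, and the vanishing mean of $\Psi_{\vecx_0}$.
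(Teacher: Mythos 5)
Your argument is correct and complete: the decomposition $\sup_{\vecx}|\Psi_\vecx|\le|\Psi_{\vecx_0}|+\sup_{\vecx}(\Psi_\vecx-\Psi_{\vecx_0})+\sup_{\vecx}(\Psi_{\vecx_0}-\Psi_\vecx)$, the identity $\expect|\Psi_{\vecx_0}|=\sqrt{2/\pi}\sqrt{K(\vecx_0,\vecx_0)}$, and the symmetry $-\Psi\stackrel{\textup{d}}{=}\Psi$ together give exactly the stated bound after taking the infimum over $\vecx_0$. The paper does not prove this lemma itself but refers to Adler's Lemma~3.1, and your symmetrization argument is precisely the standard proof of that cited result, with the measurability and finiteness caveats appropriately handled.
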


\subsection{A lower bound for deterministic approximation}
\label{sec:HilbertWorLB}

Osipenko and Parfenov 1995~\cite{OP95},
Kuo, Wasilkowski, and Wo\'zniakowski 2008~\cite{KWW08},
and Cobos, K\"uhn, and Sickel 2016~\cite{CKS16},
independently from each other found similar approaches
to relate the error of~$L_{\infty}$-approximation
to $L_2$-approximation.
Let $\rho$ be a measure on~$D$
(defined for Borel sets in~$D$, with respect to the canonical metric~$d_K$).
Recall that~\mbox{$L_p(\rho)$} denotes the space of (equivalence classes of)
measurable functions defined on~$D$ and
bounded in the norm
\begin{equation*}
	\|f\|_{L_p(\rho)}
		:= \begin{cases}
					\left(\int_D f^p \dint \rho\right)^{1/p}
						\quad&\text{for $1 \leq p < \infty$,}\\
					\esssup_{D,\rho} |f| 
						\quad&\text{for $p = \infty$,}
				\end{cases}
\end{equation*}
where
\mbox{$\esssup_{D,\rho} |f|
				:= \sup \{\lambda \in \R \mid
									\rho\{\vecx \in D : |f(\vecx)| \geq \lambda\} > 0\}$}.
For continuous functions it makes sense to consider the supremum norm
\begin{equation*}
	\|f\|_{\sup} := \sup_{\vecx \in D} |f(\vecx)| \geq \|f\|_{L_{\infty}(\rho)} \,.
\end{equation*}
Later, when the supremum norm and the $L_{\infty}$-norm coincide,
we will only write~\mbox{$\|\cdot\|_{\infty}$}.

The following version of a deterministic lower bound
is close to the formulation of
Osipenko and Parfenov~\cite[Theorem~3]{OP95},
also Cobos et al.~\cite[Lemma~3.3]{CKS16},
however, it is essentially contained in Kuo et al.~\cite{KWW08}
as well.
(Kuo et al.\ work with eigenvalues of an integral operator
defined via the kernel function~$K$.
These eigenvalues are the squared singular values,
which in turn we prefer to use here.)
\begin{proposition} \label{prop:H->L_inf/L_2}
	Let~$\rho$ be a probability measure on a domain~$D$,
	and consider a separable reproducing kernel Hilbert space~%
	\mbox{$\Hilbert = \Hilbert(K)$}
	which is compactly embedded into~\mbox{$L_{\infty}(\rho)$}.
	The embedding~\mbox{$\Hilbert \hookrightarrow L_2(\rho)$}
	is compact as well, and a singular value decomposition exists.
	This means, there is an orthonormal basis~\mbox{$(\psi_k)_{k=1}^M$}
	(with~\mbox{$M \in \N \cup \{\infty\}$}) of~$\Hilbert$
	which is also orthogonal in~\mbox{$L_2(\rho)$},
	and the corresponding
	singular values~\mbox{$\sigma_k := \|\psi_k\|_{L_2(\rho)}$},
	for~\mbox{$k < M+1$},
	are in decaying order~\mbox{$\sigma_1 \geq \sigma_2 \geq \ldots \geq 0$}.
	
	Then we have
	\begin{equation*}
		e^{\deter}(n,\Hilbert \hookrightarrow L_{\infty}(\rho))
			\geq \sqrt{\sum_{k=n+1}^{\infty}
									\sigma_k^2
								} \,.
	\end{equation*}
\end{proposition}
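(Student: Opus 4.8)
The plan is to follow the standard reduction of deterministic approximation to the \emph{radius of information}. Since optimal deterministic methods for this problem are linear and nonadaptive (as recalled above, using that the input set is a Hilbert ball and the error is measured in $L_{\infty}$), it suffices to bound from below, for every choice of $n$ continuous functionals $L_1,\dots,L_n\in\Hilbert^*$, the quantity
\[
 r(N):=\sup\{\|f\|_{L_{\infty}(\rho)} : \|f\|_{\Hilbert}\le 1,\ L_1(f)=\dots=L_n(f)=0\}.
\]
Indeed, $f$ and $-f$ produce the same (zero) information, so no algorithm can distinguish them, and thus $e^{\deter}(n,\cdot)\ge\inf_N r(N)$. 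I write $V:=\bigcap_i\ker L_i$, a closed subspace of codimension at most $n$, and let $P_V$ be the orthogonal projection of $\Hilbert$ onto $V$. The goal is to show $r(N)\ge\sqrt{\sum_{k=n+1}^{\infty}\sigma_k^2}$ for every such $N$.

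First I would localize. For a fixed $\vecx\in D$ the reproducing property gives $f(\vecx)=\langle K_{\vecx},f\rangle_{\Hilbert}=\langle P_V K_{\vecx},f\rangle_{\Hilbert}$ for all $f\in V$, whence
\[
 \sup_{f\in V,\ \|f\|_{\Hilbert}=1}|f(\vecx)|=\|P_V K_{\vecx}\|_{\Hilbert}=:\sqrt{h(\vecx)},\qquad h(\vecx)=K(\vecx,\vecx)-\|P_{V^\perp}K_{\vecx}\|_{\Hilbert}^2 .
\]
The extremizer is $f_{\vecx}:=P_V K_{\vecx}/\|P_V K_{\vecx}\|_{\Hilbert}\in V$, which satisfies $f_{\vecx}(\vecx)=\|P_V K_{\vecx}\|_{\Hilbert}$. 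The function $h$ is continuous with respect to the canonical metric $d_K$, since both $\vecx\mapsto K(\vecx,\vecx)$ and the functions in $\Hilbert$ are.

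The key step is an averaging over $\rho$ combined with a trace bound. Consider the positive, self-adjoint, trace-class operator $W:\Hilbert\to\Hilbert$ defined by $\langle Wf,g\rangle_{\Hilbert}=\langle f,g\rangle_{L_2(\rho)}$; by the singular value decomposition its eigenpairs are $(\sigma_k^2,\psi_k)$. Choosing an orthonormal basis $e_1,\dots,e_m$ of $V^\perp$ (with $m\le n$) and using $\langle e_j,K_{\vecx}\rangle_{\Hilbert}=e_j(\vecx)$, one gets $\int_D\|P_{V^\perp}K_{\vecx}\|_{\Hilbert}^2\dint\rho=\sum_{j=1}^m\|e_j\|_{L_2(\rho)}^2=\trace(P_{V^\perp}WP_{V^\perp})$, while $\int_D K(\vecx,\vecx)\dint\rho=\trace W=\sum_k\sigma_k^2$. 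The main obstacle is to control the first trace: by Ky Fan's maximum principle, the trace of $W$ compressed to any subspace of dimension at most $n$ is at most $\sum_{k=1}^n\sigma_k^2$. This is exactly what upgrades the trivial bound $\sigma_{n+1}$ (coming from $\|\cdot\|_{L_{\infty}(\rho)}\ge\|\cdot\|_{L_2(\rho)}$) to the full tail. Combining the two identities yields $\int_D h\dint\rho\ge\sum_{k=n+1}^{\infty}\sigma_k^2=:\beta$.

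Finally I would convert this average into a pointwise statement. Since $\rho$ is a probability measure and $h$ is continuous, $\sup_{\vecx\in\supp\rho}h(\vecx)\ge\int_D h\dint\rho\ge\beta$. Hence for any $\delta>0$ there is $\vecx_0\in\supp\rho$ with $h(\vecx_0)>\beta-\delta$; the corresponding unit-norm $f_{\vecx_0}\in V$ has $f_{\vecx_0}(\vecx_0)=\sqrt{h(\vecx_0)}$, and continuity of $f_{\vecx_0}$ at the support point $\vecx_0$ gives $\|f_{\vecx_0}\|_{L_{\infty}(\rho)}\ge|f_{\vecx_0}(\vecx_0)|>\sqrt{\beta-\delta}$. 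Thus $r(N)\ge\sqrt{\beta}$; letting $\delta\to0$ and taking the infimum over $N$ gives $e^{\deter}(n,\Hilbert\hookrightarrow L_{\infty}(\rho))\ge\sqrt{\sum_{k=n+1}^{\infty}\sigma_k^2}$, as claimed.
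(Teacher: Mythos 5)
The paper does not prove this proposition itself; it is imported from Osipenko--Parfenov, Kuo--Wasilkowski--Wo\'zniakowski, and Cobos--K\"uhn--Sickel, so there is no in-paper proof to compare against. Your argument is correct and is essentially the standard one from those references: reduce to the radius of information over $V=\bigcap_i\ker L_i$ via the $\pm f$ symmetry, identify $\sup\{|f(\vecx)| : f\in V,\ \|f\|_{\Hilbert}\le 1\}=\|P_VK_{\vecx}\|_{\Hilbert}$, and bound the supremum of $h(\vecx)=K(\vecx,\vecx)-\|P_{V^{\perp}}K_{\vecx}\|_{\Hilbert}^2$ from below by its $\rho$-average, which equals $\trace W-\trace(P_{V^{\perp}}WP_{V^{\perp}})\ge\sum_{k>n}\sigma_k^2$ by Ky Fan applied to the at most $n$-dimensional space $V^{\perp}$. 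All steps check out, including the final passage from the average to the $\rho$-essential supremum, which is legitimate because $(D,d_K)$ is separable (it embeds isometrically into $\Hilbert$ via $\vecx\mapsto K_{\vecx}$), so $\rho(D\setminus\supp\rho)=0$ and continuity of $f_{\vecx_0}$ at a support point transfers the pointwise value to $\|\cdot\|_{L_{\infty}(\rho)}$.
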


\section{Breaking the curse for periodic functions}
\label{sec:HilbertPeriodic}

\subsection{The setting}


We study the~$L_{\infty}$-approximation of Hilbert space functions
defined on the $d$-dimen\-sional torus~$\Torus^d$,
compare the notation in Cobos et al.~\cite{CKS16} (with slight modifications).
The Hilbert spaces we consider will be unweighted tensor product spaces.

A few words on the domain.
The one-dimensional torus~\mbox{$\Torus := \R \modulo \Z \equiv [0,1)$}
can be identified with the unit interval tying the endpoints together.
A natural way to define a metric on~$\Torus$ is
\begin{equation*}
	d_{\Torus}(x,y) := \min_{k \in\{-1,0,1\}} |x-y+k| \,,
	\quad \text{for\, $x,y \in [0,1)$.}
\end{equation*}
This is the length of the shortest connection between two points
along a closed curve of length~$1$.
For the~$d$-dimensional torus we consider $\ell_p$-like (quasi)-metrics
\begin{equation*}
	d_p(\vecx,\vecy) := \left(\sum_{j=1}^d d_{\Torus}(x_j,y_j)^p\right)^{1/p} \,,
\end{equation*}
where~$p \in (0,\infty]$.
Smoothness and continuity are to be defined with respect to the $d_p$-metric
with~$p \in [1,\infty]$.

We start with a basis representation of spaces under consideration.
First, for~$d=1$, the Fourier system
\begin{equation*}
	\{\varphi_0 := 1,\,
		\varphi_{-k} := \sqrt{2} \, \sin(2 \pi k \, \cdot),\,
		\varphi_k := \sqrt{2} \, \cos(2 \pi k \, \cdot)
	\}_{k \in \N}
\end{equation*}
is an orthonormal basis for~\mbox{$L_2(\Torus) = L_2([0,1))$}.
We consider Hilbert spaces where these functions are still orthogonal.
Namely, let \mbox{$\Hilbert_{\veclambda}(\Torus)$}
denote the Hilbert space for which the system
\begin{equation*}
	\left\{\psi_0 := \lambda_0,\,
				\psi_{-k} := \lambda_k \, \sin(2 \pi k \, \cdot),\,
				\psi_k := \lambda_k \, \cos(2 \pi k \, \cdot)
	\right\}_{k \in \N} \,,
\end{equation*}
is an orthonormal basis. 
Here, \mbox{$\veclambda = (\lambda_k)_{k \in \N_0} \subset (0,\infty)$}
indicates the importance of the different frequencies.
Now, for general~\mbox{$d \in \N$},
we consider the \emph{unweighted}
tensor product space~\mbox{$\Hilbert_{\veclambda}(\Torus^d)$}
with the tensor product
orthonormal basis~\mbox{$\{\psi_{\veck}\}_{\veck \in \Z^d}$},
\begin{equation} \label{eq:periodicpsi}
	\psi_{\veck}(\vecx) := \prod_{j=1}^d \psi_{k_j}(x_j) \,. 
\end{equation}
(For \emph{weighted} tensor product spaces one would take different
values for~$\veclambda$ for different dimensions~\mbox{$j=1,\ldots,d$},
compare Cobos et al.~\cite{CKS16}, or Kuo et al.~\cite{KWW08}.
In contrast,
\emph{unweighted} means that all coordinates are equally important.)
Analogously, we write~\mbox{$\{\varphi_{\veck}\}_{\veck \in \Z^d}$}
for the Fourier basis of~\mbox{$L_2(\Torus^d)$}.

For a suitable choice of the~$\lambda_k$,
we have the one-dimensional reproducing kernel
\begin{align}
	K_{\veclambda}(x,y)
		&:= \lambda_0^2
				+ \sum_{k=1}^{\infty}
						\lambda_k^2
							\, [\cos(2 \pi k \, x) \, \cos (2 \pi k \, y)
									+ \sin (2 \pi k \, x) \, \sin (2 \pi k \, y)] 
			\nonumber\\
		&= \sum_{k=0}^{\infty} \lambda_k^2 \, \cos (2 \pi k\,(x-y))
		\label{eq:K_la} \,,
\end{align}
for general dimensions~\mbox{$d \in \N$} we obtain the product kernel
\begin{equation*}
	K_{\veclambda}^d(\vecx,\vecy)
		:= \prod_{j=1}^d K_{\veclambda}(x_j,y_j) \,.
\end{equation*}
From this we derive the initial error,
\begin{equation*}
	e(0,\Hilbert_{\veclambda}(\Torus^d) \embed L_{\infty}(\Torus^d))
		= \sup_{\vecx \in \Torus^d} \sqrt{K_{\veclambda}^d(\vecx,\vecx)}
		= \left( \sum_{k=0}^{\infty} \lambda_k^2 \right)^{d/2} \,.
\end{equation*}
The condition~\mbox{$\sum_{k=0}^{\infty} \lambda_k^2 < \infty$}
is necessary and sufficient for the existence of a reproducing kernel
and for the embedding
\mbox{$\Hilbert_{\veclambda}(\Torus^d) \hookrightarrow L_{\infty}$}
to be compact,
see Cobos et al.~\cite[Theorem~3.1]{CKS16}
with an extended list of equivalent properties.
We will assume \mbox{$\sum_{k=0}^{\infty} \lambda_k^2 = 1$},
then the initial error is~$1$, independently from the dimension~$d$.
This makes the family
\mbox{$(\Hilbert_{\veclambda}(\Torus^d) \embed L_{\infty}(\Torus^d))_{d \in \N}$} of $d$-dependent problems
comparable, we say, ``the problem is properly normalized''.

Note that under this last assumption,
functions~\mbox{$f \in \Hilbert_{\veclambda}(\Torus^d)$}
can be identified with
functions~\mbox{$\tilde{f} \in \Hilbert_{\veclambda}(\Torus^{d+1})$},
\begin{equation*}
	\tilde{f}(x_1,\ldots,x_{d+1})
		:= f(x_1,\ldots,x_d) \, K_{\veclambda}(0,x_{d+1}) \,,
\end{equation*}
the $\Hilbert_{\veclambda}$- and the $L_{\infty}$-norms coincide,
the maximum values of the function being attained for~\mbox{$x_{d+1} = 0$}.
So indeed,
the problems of lower dimensions are embedded
in the problems of higher dimensions,
yet~$\tilde{f}$ is a bit lopsided in the redundant variable.


In particular we consider unweighted \emph{Korobov spaces}.
Within the above framework, these are spaces~%
\mbox{$\Hilbert_r^{\Korobov}(\Torus^d)
				:= \Hilbert_{\veclambda}(\Torus^d)$}
with~\mbox{$\lambda_0 = \sqrt{\beta_0}$}
and~\mbox{$\lambda_k = \sqrt{\beta_1} \, k^{-r}$}
for~\mbox{$k \in \N$}, where~$\beta_0,\beta_1 > 0$.
For integers~\mbox{$r \in \N$},
the Korobov space norm can be given in a natural way
in terms of weak partial derivatives (instead of Fourier coefficients),
in the one-dimensional case we have
\begin{equation*}
	\|f\|_{\Hilbert_r^{\Korobov}(\Torus)}^2
		= \beta_0^{-1} \left| \int_{\Torus} f(x) \dint x \right|^2
			+ \beta_1^{-1} \, (2\pi)^{-2r} \, \|f^{(r)}\|_2^2 \,.
\end{equation*}
The $d$-dimensional case is a bit more complicated,
in a squeezed way, the norm is
\begin{equation*}
	\|f\|_{\Hilbert_r^{\Korobov}(\Torus^d)}^2
		= \sum_{J \subseteq [d]}
				\beta_0^{-(d - \#J)} \, (\beta_1^{-1} \, (2\pi)^{-2r})^{\#J}
					\, \left\| \int_{\Torus^{[d] \setminus J}}
												\biggl(\prod_{j \in J} \partial_j^r\biggr)
													f(\vecx) \dint \vecx_{[d] \setminus J}
						\right\|_{L_2(\Torus^J)}^2 \,,
\end{equation*}
see Novak and Wo\'zniakowski~\cite[Section~A.1]{NW08} for details
on the derivation of this representation of the norm.
There one can also find some information
on the historical background concerning these spaces.
It should be pointed out that in the same book tractability
for $L_2$-approximation of Korobov functions based on linear information
has been studied~\cite[pp.~191--193]{NW08},
in that case randomization does not help a lot.

The condition~\mbox{$r > 1/2$}
is necessary and sufficient for the existence of a reproducing kernel
(and the embedding~%
\mbox{$\Hilbert_r^{\Korobov}(\Torus^d)
					\hookrightarrow L_{\infty}(\Torus^d)$}
to be compact),
then
\begin{equation*}
	\sum_{k=1}^{\infty} \lambda_k^2
		= \beta_1 \, \sum_{k=1}^{\infty} k^{-2 r}
		= \beta_1 \, \zeta(2 r)
\end{equation*}
with the Riemann zeta function~$\zeta$.
Assuming
\begin{equation}\label{eq:beta12init}
	\beta_0 + \beta_1 \, \zeta(2 r) = 1 \,,
\end{equation}
the initial error will be constant~$1$ in all dimensions.

\subsection{The curse for deterministic approximation}

\begin{theorem} \label{thm:curseperiodic}
	Suppose~\mbox{$0 \leq \lambda_0 < 1$}
	and~\mbox{$\sum_{k=0}^{\infty} \lambda_k^2 = 1$}
	for non-negative~$\lambda_k$.
	Then the approximation problem
	\begin{equation*}
		\App: \Hilbert_{\veclambda}(\Torus^d) \hookrightarrow L_{\infty}(\Torus^d)
	\end{equation*}
	suffers from the curse of dimensionality in the deterministic setting.
	
	In detail, while the initial error is constant~$1$, we have
	\begin{equation*}
		e^{\deter}(n,\Hilbert_{\veclambda}(\Torus^d)
									\hookrightarrow L_{\infty}(\Torus^d))
			\geq \sqrt{(1 - n \, \beta^d)_+} \,,
	\end{equation*}
	where~\mbox{$\beta := \sup\{\lambda_0^2,\,
															\lambda_k^2/2
														\}_{k \in \N} \in (0,1)$}.
	In other words,
	for~\mbox{$\eps \in (0,1)$} we have the complexity bound
	\begin{equation*}
		n^{\deter}(\eps,\Hilbert_{\veclambda}(\Torus^d)
									\hookrightarrow L_{\infty}(\Torus^d))
			\geq \beta^{-d}(1-\eps)^2 \,.
	\end{equation*}
	
	The curse and this complexity bound hold in particular
	for Korobov spaces~$\Hilbert_r^{\Korobov}(\Torus^d)$
	with \mbox{$\beta = \sup\{\beta_0,\,\beta_1/2\}$}.
\end{theorem}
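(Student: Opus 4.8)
The plan is to invoke the deterministic lower bound of \propref{prop:H->L_inf/L_2} with $\rho$ the Lebesgue measure on $\Torus^d$, which is a probability measure since $\Torus^d$ has total mass $1$. The key structural observation is that the orthonormal basis $\{\psi_{\veck}\}_{\veck \in \Z^d}$ of $\Hilbert_{\veclambda}(\Torus^d)$ is, up to scalar factors, the Fourier system, and the Fourier system is orthogonal in $L_2(\Torus^d)$. Thus this very basis already realizes a singular value decomposition of the embedding, and the singular values are simply the $L_2(\rho)$-norms $\sigma_{\veck} = \|\psi_{\veck}\|_{L_2(\rho)}$; no further diagonalization is needed.

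First I would read off the one-dimensional $L_2$-norms. Since $\psi_0 = \lambda_0$ and $\psi_{\pm k} = \lambda_k$ times the functions $\sin(2\pi k\,\cdot)$, $\cos(2\pi k\,\cdot)$, whose $L_2(\Torus)$-norm is $1/\sqrt 2$, one gets $\|\psi_0\|_{L_2}^2 = \lambda_0^2$ and $\|\psi_{\pm k}\|_{L_2}^2 = \lambda_k^2/2$. By the product structure~\eqref{eq:periodicpsi} the $d$-dimensional squared singular values factorize, $\sigma_{\veck}^2 = \prod_{j=1}^d \|\psi_{k_j}\|_{L_2}^2$, so each factor lies in $\{\lambda_0^2\}\cup\{\lambda_k^2/2 : k\in\N\}$ and is therefore at most $\beta$. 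This yields the uniform bound $\sigma_{\veck}^2 \le \beta^d$ for every $\veck \in \Z^d$.

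Next I would pin down the total spectral mass. Writing $\sigma_{\veck}^2 = \int_{\Torus^d}\psi_{\veck}^2 \dint\rho$ and summing over $\veck$, the kernel representation~\eqref{eq:k=sumpsi} gives $\sum_{\veck}\sigma_{\veck}^2 = \int_{\Torus^d} K_{\veclambda}^d(\vecx,\vecx)\dint\rho(\vecx) = 1$, because the diagonal of the product kernel is the constant $\bigl(\sum_k \lambda_k^2\bigr)^d = 1$ and $\rho$ is a probability measure. Ordering the singular values decreasingly, the $n$ largest contribute at most $n\beta^d$, so the tail obeys $\sum_{k>n}\sigma_k^2 = 1 - \sum_{k\le n}\sigma_k^2 \ge 1 - n\beta^d$. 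Plugging this into \propref{prop:H->L_inf/L_2} gives $e^{\deter}(n) \ge \sqrt{(1-n\beta^d)_+}$. Inverting (any $n$ with $1-n\beta^d > \eps^2$ forces error $>\eps$) produces $n^{\deter}(\eps) \ge \beta^{-d}(1-\eps^2) \ge \beta^{-d}(1-\eps)^2$, and since $\beta\in(0,1)$ this grows exponentially in $d$, which is the curse. The Korobov case is then immediate: with $\lambda_0^2 = \beta_0$ and $\lambda_k^2/2 = \tfrac{1}{2}\beta_1 k^{-2r}$ the relevant supremum is attained at $k=1$, giving $\beta = \sup\{\beta_0, \beta_1/2\}$.

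I do not expect a genuine obstacle; the argument is short once the observation that the Fourier basis is itself the SVD has been made. The one point deserving care is the spectral flatness that drives everything: the proof works precisely because no single squared singular value exceeds $\beta^d$ while their total equals $1$, so deleting the top $n$ cannot remove more than $n\beta^d$ of the mass, which forces exponentially many comparably sized singular values. Correspondingly, I would verify the strict bound $\beta < 1$ (from $\lambda_0^2 < 1$ and $\lambda_k^2/2 \le 1/2$) together with $\beta > 0$, since this strictness is exactly what converts $\beta^{-d}$ into genuine exponential growth rather than a vacuous constant.
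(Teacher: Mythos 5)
Your proposal is correct and follows essentially the same route as the paper: apply \propref{prop:H->L_inf/L_2} with $\rho$ the uniform measure, observe that the given tensor Fourier basis is already the singular value decomposition with $\sigma_0^2=\lambda_0^2$, $\sigma_{\pm k}^2=\lambda_k^2/2$, bound every $\sigma_{\veck}^2$ by $\beta^d$, and use total mass $1$ to control the tail. The only cosmetic difference is that you compute $\sum_{\veck}\sigma_{\veck}^2=1$ via the kernel diagonal rather than as $\bigl(\sum_{k'}\sigma_{k'}^2\bigr)^d$, and your inversion even yields the slightly sharper constant $1-\eps^2$ before weakening to $(1-\eps)^2$.
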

\begin{proof}
	Following \propref{prop:H->L_inf/L_2}, we study the singular values
	of~\mbox{$\Hilbert_{\veclambda}(\Torus^d) \hookrightarrow L_2(\Torus^d)$}.
	Essentially, this can be traced back to the one-dimensional case,
	\begin{equation*}
		\psi_k = \sigma_k \varphi_k
		\quad \text{for\, $k \in \Z$,}
	\end{equation*}
	where~\mbox{$\sigma_0 = \lambda_0$}
	and \mbox{$\sigma_k = \sigma_{-k} = \lambda_k / \sqrt{2}$} for~{$k \in \N$}
	denote the unordered singular values
	of~\mbox{$\Hilbert_{\veclambda}(\Torus) \hookrightarrow L_2(\Torus)$}.
	In the multi-dimensional case we have
	\begin{equation*}
		\psi_{\veck} = \sigma_{\veck} \varphi_{\veck}
		\quad \text{for\, $\veck \in \Z^d$,}
	\end{equation*}
	with the unordered singular values~%
	\mbox{$\sigma_{\veck} = \prod_{j=1}^d \sigma_{k_j}$},
	in particular
	\begin{equation*}
		\sigma_{\veck}^2
			\leq \left(\sup_{k' \in \Z} \sigma_{k'}^2\right)^d
			= \left(\sup\{\lambda_0^2, \, \lambda_{k'}^2 / 2\}_{k' \in \N}\right)^d
			= \beta^d \,.
	\end{equation*}
	On the other hand,
	\begin{equation*}
		\sum_{\veck \in \Z^d} \sigma_{\veck}^2
			= \left(\sum_{k' \in \Z} \sigma_{k'}^2 \right)^d
			= \left(\sum_{k' = 0}^{\infty} \lambda_{k'}^2 \right)^d
			= 1 \,.
	\end{equation*}
	So for any index set~\mbox{$I \subset \Z^d$} of size~\mbox{$\# I = n$},
	we have
	\begin{equation*}
		\sqrt{\sum_{\veck \in \Z^d \setminus I} \sigma_{\veck}^2}
			\geq \sqrt{(1 - n \beta^d)_+} \,.
	\end{equation*}
	By \propref{prop:H->L_inf/L_2}, this proves the lower bound.
\end{proof}

\begin{remark}[Structure of optimal deterministic methods]
	Within the above proof we applied \propref{prop:H->L_inf/L_2}
	with~$\rho$ being the uniform distribution on~$\Torus^d$.
	If we consider complex-valued Hilbert spaces,
	theoretically this approach will always give sharp lower bounds,
	with algorithms projecting onto $n$-dimensional spans
	of a subset of the complex orthonormal basis
	\mbox{$\{\euler^{2\pi\imag(\veck,\cdot)}\}_{\veck \in \Z^d}$},
	see Cobos et al.~\cite[Theorem~3.4]{CKS16}.
	Here, $(\veck,\vecx)$ is the standard scalar product in~$\R^d$.
	The problem of this for the real-valued setting is
	that, for~\mbox{$\veck \in \Z^d \setminus \zeros$},
	the Fourier coefficients belonging to~$\euler^{2\pi (\veck,\cdot)}$
	in general will be complex, even for real-valued functions.
	Alternatively, we could consider a real-valued variant of a
	$d$-dimensional Fourier basis,
	containing functions~$\sqrt{2} \sin(2\pi (\veck,\cdot))$
	and~$\sqrt{2} \cos(2\pi (\veck,\cdot))$.
	A real-valued algorithm with matching lower bounds
	will be a projection onto an $n$-dimensional subspace
	spanned by a selection
	of such pairs, and -- as the case may be -- the constant function.
	Hence, in the real-valued setting upper and lower bounds almost match,
	with gaps for the $n$-th error bounded by the $n$-th singular value.
	The above result on the curse of dimensionality is much rougher,
	employing only a general upper bound which is valid for all singular values.
\end{remark}

\subsection{Polynomial Tractability via Monte Carlo}

The key tool for understanding the canonical metric for tensor product kernels
is an estimate on the shape of the kernel in terms of some local polynomial decay.
We have the following general result.

\begin{proposition} \label{prop:MCUBperiodic}
	Consider the uniform approximation problem
	\begin{equation*}
		\App: \Hilbert(K_d) \hookrightarrow L_{\infty}(\Torus^d)
	\end{equation*}
	where~$\Hilbert(K_d)$ is a reproducing kernel Hilbert space on
	the $d$-dimensional torus~$\Torus^d$ with the following properties:
	\begin{enumerate}[(i)]
		\item \label{enum:MCUBperiodic,unweighted}
			$K_d$ is the unweighted product kernel built
			from the one-dimensional case,\\
			this means
			\mbox{$K_d(\vecx,\vecy) := \prod_{j=1}^d K_1(x_j,y_j)$}
			for~\mbox{$\vecx,\vecy \in \Torus^d$}.
		\item \label{enum:MCUBperiodic,init}
			\mbox{$K_1(x,x) = 1$} for all~$x \in \Torus$.\\
			(Consequently, \mbox{$K_d(\vecx,\vecx) = 1$}
			for all~\mbox{$\vecx \in \Torus^d$},
			in particular the initial error is constant~$1$.)
		\item \label{enum:MCUBperiodic,local}
			The kernel function can be locally estimated from below
			with a polynomial decay,
			that is,
			there exist \mbox{$\alpha > 0$}, \mbox{$p \in (0,1]$},
			and~\mbox{$0 < R_0 \leq 1/2$},
			such that
			\begin{equation*}
				K_1(x,y) \geq 1 - \alpha \, d_{\Torus}(x,y)^p
				\quad
				\text{for \mbox{$x,y \in \Torus$} with~\mbox{$d_{\Torus}(x,y) \leq R_0$}.}
			\end{equation*}
			(Hence \mbox{$K_d(\vecx,\vecy)
															\geq 1 - \alpha \, d_p(\vecx,\vecy)^p$}
			\quad for \mbox{$\max_j d_{\Torus}(x_j,y_j) \leq R_0$}.)
		\end{enumerate}
	Then the problem is polynomially tractable
	in the randomized setting with general linear information,
	in detail,
	\begin{equation*}
		n^{\ran}(\eps,\Hilbert(K_d) \hookrightarrow L_{\infty}(\Torus^d))
			\leq C(p) \, (1 + \alpha - \log 2 R_0) \, \frac{d \, (1 + \log d)}{\eps^2} \,,
	\end{equation*}
	with a constant~\mbox{$C(p) > 0$} which depends only on~$p$.
\end{proposition}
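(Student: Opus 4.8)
The plan is to run the idealized Gaussian Monte Carlo method through \lemref{lem:stdMCapp} and then estimate the expected supremum of the associated Gaussian field. Applying \lemref{lem:stdMCapp} to $S = \App$ with the tensor product orthonormal basis $(\psi_{\veck})$, the random element $\sum_{\veck} X_{\veck}\,S(\psi_{\veck})$ is precisely the zero-mean Gaussian field $\Psi$ from \secref{sec:E|Psi|_sup} whose covariance is $K_d$; hence it suffices to show $\expect\|\Psi\|_{\infty}^2 \preceq C(p)\,(1 + \alpha - \log 2R_0)\,d\,(1+\log d)$, after which $n^{\ran}(\eps) \leq \lceil 4(\expect\|\Psi\|_{\infty}/\eps)^2\rceil$ gives the claim. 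By \lemref{lem:E|X|<init+EsupX} and property~(\ref{enum:MCUBperiodic,init}) (so that $K_d \equiv 1$ on the diagonal), $\expect\|\Psi\|_{\infty} \leq \sqrt{2/\pi} + 2\,\expect\sup_{\vecx}\Psi_{\vecx}$, so everything reduces to bounding $\expect\sup_{\vecx}\Psi_{\vecx}$ via Dudley's inequality in the form~\eqref{eq:Dudley2}.

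Next I would translate property~(\ref{enum:MCUBperiodic,local}) into a lower bound on the mass of small $d_K$-balls. Since $K_d(\vecx,\vecx)=1$, the canonical metric is $d_K(\vecx,\vecy) = \sqrt{2(1-K_d(\vecx,\vecy))}$, and the local estimate yields $d_K(\vecx,\vecy) \leq \sqrt{2\alpha}\,d_p(\vecx,\vecy)^{p/2}$ whenever $\max_j d_{\Torus}(x_j,y_j) \leq R_0$. Taking $\mu$ to be the uniform measure on $\Torus^d$ and the competitor set to be the box $\{\vecy : d_{\Torus}(x_j,y_j) \leq \delta \text{ for all } j\}$, one has $d_p \leq d^{1/p}\delta$ on this box, so the box lies in $B_K(\vecx,r)$ as soon as $\delta \leq \delta(r) := \min\{R_0,\,(r/\sqrt{2\alpha d})^{2/p}\}$. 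This gives $\mu(B_K(\vecx,r)) \geq (2\delta(r))^d$, hence $\sup_{\vecx}\sqrt{\log(1/\mu(B_K(\vecx,r)))} \leq \sqrt{d}\,\sqrt{\log(1/(2\delta(r)))}$ uniformly in $\vecx$.

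The core of the argument is then the resulting single integral. Plugging this into~\eqref{eq:Dudley2} and truncating at $\diam(\Torus^d)\leq 2$ (beyond which the integrand vanishes) leaves $\expect\sup_{\vecx}\Psi_{\vecx} \leq 2\,C_{\textup{Dudley}}\,\sqrt d \int_0^{2} \sqrt{\log(1/(2\delta(r)))}\dint r$. I would split this at $r_\ast := \sqrt{2\alpha d}\,R_0^{p/2}$. On the flat part $r \in [r_\ast,2]$ (nonempty only for small $d$) the integrand equals $\sqrt{-\log 2R_0}$, contributing $\preceq \sqrt{d}\,\sqrt{1-\log 2R_0}$. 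On the polynomial part $r \in [0,\min\{r_\ast,2\}]$ one has $\log(1/(2\delta(r))) = \tfrac1p\log(2\alpha d) - \tfrac2p\log r - \log 2$; using $\sqrt{a+b}\leq\sqrt a+\sqrt b$ to separate the $\log d$-sized constant from the singular $-\log r$ term, together with $\int_0^1\sqrt{-\log r}\dint r = \Gamma(3/2)$, this integral is $\preceq \sqrt{\tfrac1p\log(2\alpha d)} + C(p)$. Squaring and using $\log(2\alpha d) \preceq (1+\alpha)(1+\log d)$ together with $-\log 2R_0 \le (1-\log 2R_0)(1+\log d)$ recombines the two contributions into $C(p)\,(1+\alpha-\log 2R_0)\,d\,(1+\log d)$, as required.

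The step I expect to be the main obstacle is getting the $d$-dependence exactly right in this last integral: the bounded diameter is what keeps the integration range inside the polynomial-decay regime as $d\to\infty$, and it is the constant term $\tfrac1p\log(2\alpha d)$ under the square root --- rather than the behaviour near $r=0$ --- that produces the extra logarithmic factor $\sqrt{1+\log d}$. Care is also needed to keep the box within the region $\delta\leq R_0$ where property~(\ref{enum:MCUBperiodic,local}) is valid, which is exactly what the truncation in the definition of $\delta(r)$ enforces.
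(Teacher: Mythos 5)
Your proposal is correct and follows the paper's proof essentially step for step: \lemref{lem:stdMCapp} reduces the complexity bound to $\expect\|\Psi\|_{\infty}$ for the Gaussian field with covariance $K_d$, which is then controlled via \lemref{lem:E|X|<init+EsupX} and Dudley's inequality in the form~\eqref{eq:Dudley2}, with the entropy integral split at the radius where the local kernel estimate saturates ($\delta = R_0$) and truncated at the diameter bound $r=2$. The only deviation is the volume lower bound for $B_K(\vecx,r)$, where you inscribe a coordinate cube of half-side $\delta(r)=\min\{R_0,(r/\sqrt{2\alpha d})^{2/p}\}$ and use $\mu \geq (2\delta(r))^d$ directly, instead of the paper's exact $\ell_p$-ball volume via the Gamma function and Stirling's formula; both yield the same $d\,(1+\log d)$ factor and your variant is slightly more elementary (just take positive parts when applying $\sqrt{a+b}\le\sqrt{a}+\sqrt{b}$, since $-\tfrac{2}{p}\log r$ changes sign at $r=1$).
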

\begin{proof}
	We are going to apply \propref{prop:Dudley} (Dudley) in order
	to estimate the expected maximum norm of the Gaussian field~$\Psi$
	associated with the reproducing kernel~$K_d$.
	We will work with volume estimates,
	where $\mu$ shall be the uniform distribution on~$\Torus^d$,
	this is the Lebesgue measure on~\mbox{$[0,1)^d$}.
	
	Suppose \mbox{$\max_j d_{\Torus}(x_j,y_j) \leq R_0$},
	then for the canonical metric in the $d$-dimensional case we have
	\begin{align*}
		d_K(\vecx,\vecy)^2
			&= K_d(\vecx,\vecx) + K_d(\vecy,\vecy) - 2 \, K_d(\vecx,\vecy) \\
			&\leq 2 \alpha \, d_p(\vecx,\vecy)^p \,.
	\end{align*}
	By this, we have the inclusion
	\begin{equation*}
		B_K(\vecx,r) \supseteq
		B_{\Torus,p}\left(\vecx,\,
											\left(\frac{r^2}{2 \alpha}\right)^{1/p}
								\right) \,,
	\end{equation*}
	where~\mbox{$B_{\Torus,p}(\vecx,R)$} denotes
	the $d_p$-ball of radius~$R$ around~$\vecx \in \Torus^d$,
	and \mbox{$B_K(\vecx,r)$} is the ball of radius~$r$ in the canonical metric
	associated with~$K_d$.
	Hence
	\begin{equation*}
		\mu(B_K(\vecx,r))
			\geq \mu\left(B_{\Torus,p}\left(\vecx,\,
																			\left(\frac{r^2}{2 \alpha}\right)^{1/p}
																\right)
							\right) \,.
	\end{equation*}
	We distinguish three cases:
	\begin{itemize}
		\item For~\mbox{$0 \leq R \leq R_0 \leq 1/2$},
			the $\mu$-volume \mbox{$\mu(B_{\Torus,p}(\vecx,R))$} of the $d_p$-ball
			is the volume~\mbox{$\Vol(R \, B_p^d)$}
			of an $\ell_p$-ball in~$\R^d$ with radius~$R$,
			so with Stirling's formula,
			\begin{align}
				\log(1/\mu(B_{\Torus,p}(\vecx,R)))
					&= \log(1/\Vol(R \, B_p^d)) \nonumber\\
					&= \log \Gamma\left(\frac{d}{p} + 1\right)
								- d \, \log \left[2 R \, \Gamma\left(\frac{1}{p} + 1\right)
														\right]
							\nonumber\\
				[\textstyle \Gamma\bigl(\frac{1}{p} + 1\bigr) \geq 1]\qquad
					&\leq C_1 \, \frac{d}{p} \, \left(1 + \log \frac{d}{p}\right)
							\, (1 - \log 2 R)
					\nonumber \\
					&\leq C_2(p) \, d \, (1 + \log d) \, (1 - \log (2 R)^{p/2})
					\label{eq:log(1/mu(B2))}\,.
			\end{align}
			Such an estimate can be used
			for~\mbox{$0 \leq r
									 \leq \sqrt{2\alpha \, R_0^p}
									 \leq \sqrt{2^{1-p}\,\alpha}$}.
			Concerning the behaviour of the constant we can state
			\mbox{$C_2(p) \preceq p^{-2} (1 + \log p^{-1})$}.
		\item For~\mbox{$R > R_0$},
			the $\mu$-volume of~\mbox{$B_{\Torus,p}(\vecx,R)$}
			can be estimated from below by the $\mu$-volume
			of an $\ell_p$-ball with radius~\mbox{$R_0 \leq 1/2$},
			we have
			\begin{equation*}
				\log(1/\mu(B_{\Torus,p}(\vecx,R)))
					\leq C_2(p) \, d \, (1 + \log d) \, (1 - \log 2 R)
			\end{equation*}
			We will use this to cover the case~\mbox{$\sqrt{2\alpha \, R_0^p} < r < 2$}.
		\item For~\mbox{$r \geq 2$},
			we know~\mbox{$B_K(\vecx,r) = \Torus^d$} with $\mu$-volume~$1$
			since~\mbox{$d_K(\vecx,\vecy) \leq 2$}.
			In this case the term~$\log(1/\mu(B_K(\vecx,r)))$
			vanishes.
	\end{itemize}
	Combining these cases, we can estimate
	\begin{align*}
		\int_0^{\infty} \sqrt{\log(1/\mu(B_K(\vecx,r)))} \dint r
			&\leq \int_0^{\sqrt{2^{1-p}\,\alpha}}
							\sqrt{\log \left(1 \middle/
																		\Vol\left(\left(\frac{r^2}{2 \alpha}
																							\right)^{1/p}
																								\, B_p^d
																				\right)
													\right)
										} \,
								\dint r \\
			&\qquad + \left(2 - \sqrt{2\alpha \, R_0^p}\right)_+
								\, \sqrt{\log\left(1 \middle/
																			\Vol
																				\left(R_0 \, B_p^d
																				\right)
															\right)
													} \\
			&\stackrel{\text{\eqref{eq:log(1/mu(B2))}}}{\leq}
				\sqrt{C_2(p) \, d \, (1 + \log d)} \\
			&\qquad\qquad \Biggl(\int_0^{\sqrt{2^{1-p}\,\alpha}}
															\sqrt{1 - \log \frac{r}{\sqrt{2^{1-p}\,\alpha}}}
														\dint r \\
			&\qquad\qquad\phantom{\Biggl(}
														+\, 2 \, \sqrt{1 - \log 2 R_0}
										\Biggr) \\
			&= C_3(p) \, \sqrt{d \, (1 + \log d)}
							\,\bigl(1 + \sqrt{\alpha} + \sqrt{- \log 2 R_0}\bigr) \,.
	\end{align*}
	Here, the last integral can be transformed into a familiar integral
	by the substitution~\mbox{$s^2/2 = 1 - \log (r/\sqrt{2^{1-p}\,\alpha})$},
	\begin{equation*}
		\int_0^{\sqrt{2^{1-p} \alpha}}
						\sqrt{1 - \log \frac{r}{\sqrt{2^{1-p} \, \alpha}}}
					\dint r
			= \euler \, 2^{-p} \, \sqrt{\alpha}
					\, \int_1^{\infty}
									s^2 \, \exp\left(-\frac{s^2}{2}\right)
								\dint s \,.
	\end{equation*}
	
	Now, consider the Gaussian field~$\Psi$
	associated with the reproducing kernel~$K_d$.
	Using Dudley's result, here in the derived form~\eqref{eq:Dudley2},
	with \lemref{lem:E|X|<init+EsupX} we obtain
	\begin{align*}
		\expect \|\Psi\|_{\infty}
			&\leq \sqrt{\frac{2}{\pi}}
						+ 4 \, C_{\textup{Dudley}} \, C_3(p)
								\, \sqrt{d \, (1 + \log d)}
								\, \bigl(1 + \sqrt{\alpha} + \sqrt{- \log 2 R_0}\bigr) \\
			&\leq C_4(p) \, \sqrt{d \, (1 + \log d)}
									 \, \bigl(1 + \sqrt{\alpha} + \sqrt{- \log 2 R_0}\bigr) \,.
	\end{align*}
	By \lemref{lem:stdMCapp}, this gives us a final upper bound on the complexity.
	Concerning the $p$-dependence of the constant,
	we have $C(p) \asymp C_2(p) \preceq p^{-2} (1 + \log p^{-1})$.
\end{proof}

First we give sufficient conditions for the parameters~$\veclambda$
of the kernels~$K_{\veclambda}$
for that the conditions of \propref{prop:MCUBperiodic}
can be fulfilled with $p=1$.

\begin{lemma} \label{lem:kernelshape,r>1}
	Given a kernel
	\begin{equation*}
		K_1(x,y) := K_{\veclambda}(x,y)
			= \sum_{k=0}^{\infty} \lambda_k^2 \, \cos 2 \pi k (x-y)
	\end{equation*}
	with~\mbox{$\lambda_k \geq 0$},
	assume that the following holds:
	\begin{enumerate}[\quad(a)\quad]
		\item \label{enumcor:periodic,init}
			$\sum_{k=0}^{\infty} \lambda_k^2 = 1$,
		\item \label{enumcor:periodic,sum(k*la2)}
			$\sigma_{\veclambda}
				:= \sum_{k=1}^{\infty} k \, \lambda_k^2 < \infty$.
	\end{enumerate}
	Then the assumptions in \propref{prop:MCUBperiodic}
	are fulfilled with~$p=1$, $\alpha = 2 \pi \sigma_{\veclambda}$,
	and $R_0 = 1/2$.
	
	In particular, 
	for Korobov spaces~$\Hilbert_r^{\Korobov}(\Torus)$ with smoothness $r > 1$,
	we have \mbox{$\sigma_{\veclambda} = \beta_1 \, \zeta(2r-1) < \infty$}.
\end{lemma}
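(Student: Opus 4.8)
The plan is to verify the two hypotheses \eqref{enumcor:periodic,init} and \eqref{enumcor:periodic,sum(k*la2)} directly produce the three conditions of \propref{prop:MCUBperiodic}. Condition~\eqref{enum:MCUBperiodic,unweighted} (unweighted product structure) holds automatically by the tensor product construction of $\Hilbert_{\veclambda}(\Torus^d)$, and condition~\eqref{enum:MCUBperiodic,init} (normalization $K_1(x,x)=1$) is immediate from assumption~\eqref{enumcor:periodic,init} since $K_{\veclambda}(x,x) = \sum_{k=0}^\infty \lambda_k^2 \cos(0) = \sum_{k=0}^\infty \lambda_k^2 = 1$. The only real content is establishing the local lower bound~\eqref{enum:MCUBperiodic,local} with the claimed parameters $p=1$, $\alpha = 2\pi\sigma_{\veclambda}$, $R_0 = 1/2$.

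For the local estimate, I would start from the representation
\begin{equation*}
	1 - K_{\veclambda}(x,y)
		= \sum_{k=0}^{\infty} \lambda_k^2 \bigl(1 - \cos 2\pi k(x-y)\bigr)
		= \sum_{k=1}^{\infty} \lambda_k^2 \bigl(1 - \cos 2\pi k(x-y)\bigr) \,,
\end{equation*}
where the $k=0$ term vanishes. The key elementary inequality is $1 - \cos\theta \leq |\theta|$ for all real $\theta$ (which follows from $1-\cos\theta = 2\sin^2(\theta/2) \leq 2|\sin(\theta/2)|\cdot\tfrac12 \leq |\theta/2|\cdot 2 \cdot \tfrac12$, or more directly from $1-\cos\theta \le \tfrac12\theta^2$ combined with the global bound, but the cleanest route is $1-\cos\theta = \int_0^\theta \sin t\,\dint t \le |\theta|$). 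Applying this with $\theta = 2\pi k(x-y)$ and using that $|x-y|$ can be replaced by $d_{\Torus}(x,y)$ by periodicity of the cosine, I would get
\begin{equation*}
	1 - K_{\veclambda}(x,y)
		\leq \sum_{k=1}^{\infty} \lambda_k^2 \cdot 2\pi k \, d_{\Torus}(x,y)
		= 2\pi \sigma_{\veclambda} \, d_{\Torus}(x,y) \,,
\end{equation*}
which is exactly the required bound $K_1(x,y) \geq 1 - \alpha\, d_{\Torus}(x,y)^p$ with $p=1$, $\alpha = 2\pi\sigma_{\veclambda}$. Since this holds for all $x,y \in \Torus$ without restriction, one may take $R_0 = 1/2$ (the maximal allowed value).

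The main obstacle, though minor, is justifying that the cosine argument depends only on $d_{\Torus}(x,y)$ rather than the raw difference $x-y$: because $\cos 2\pi k(x-y)$ is $1$-periodic in $(x-y)$ and even, one has $\cos 2\pi k(x-y) = \cos 2\pi k\, d_{\Torus}(x,y)$, so the substitution is legitimate and the bound $|2\pi k(x-y)|$ can be taken as $2\pi k\, d_{\Torus}(x,y)$. The other points are checks rather than obstacles. Finally, for the Korobov case $\Hilbert_r^{\Korobov}(\Torus)$, I would substitute $\lambda_k^2 = \beta_1 k^{-2r}$ into the defining series to obtain
\begin{equation*}
	\sigma_{\veclambda}
		= \sum_{k=1}^{\infty} k \, \beta_1 k^{-2r}
		= \beta_1 \sum_{k=1}^{\infty} k^{-(2r-1)}
		= \beta_1 \, \zeta(2r-1) \,,
\end{equation*}
which is finite precisely when $2r-1 > 1$, i.e.\ $r > 1$, matching the stated smoothness condition.
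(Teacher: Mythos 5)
Your proof is correct, and it reaches the same constants ($p=1$, $\alpha = 2\pi\sigma_{\veclambda}$, $R_0=1/2$) by a slightly different mechanism than the paper. The paper differentiates the series term by term, using assumption (b) to justify absolute convergence of the resulting sine series, bounds $\frac{\diff}{\diff x}K(x,0) \geq -2\pi\sigma_{\veclambda}$, and integrates to get $K(x,0) \geq 1 - 2\pi\sigma_{\veclambda}\,x$. You instead apply the elementary inequality $1-\cos\theta \leq |\theta|$ termwise to $1-K_{\veclambda}(x,y) = \sum_{k\geq 1}\lambda_k^2\,(1-\cos 2\pi k(x-y))$, which is legitimate since all terms are nonnegative and the series converges by (a). Your route is marginally more elementary -- it avoids any discussion of differentiating under the sum -- and it also makes explicit the reduction from $|x-y|$ to $d_{\Torus}(x,y)$ via periodicity and evenness of the cosine, a point the paper leaves implicit by only treating $K(x,0)$ for $x\geq 0$. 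The derivative-based viewpoint of the paper is what one would push further to obtain larger exponents $p=2r-1$ for $1<r<3/2$ (as discussed in the remark following \lemref{lem:kernelshape,r<=1}), but for the statement at hand both arguments are equally complete; the verification of conditions (\ref{enum:MCUBperiodic,unweighted}), (\ref{enum:MCUBperiodic,init}) and the Korobov computation $\sigma_{\veclambda}=\beta_1\,\zeta(2r-1)$ match the paper exactly.
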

\begin{proof}
	Condition~\eqref{enumcor:periodic,init}
	is for the normalization of the initial error,
	see~\eqref{enum:MCUBperiodic,init} in~\propref{prop:MCUBperiodic}.
	
	Condition~\eqref{enumcor:periodic,sum(k*la2)} guarantees differentiability
	of~\mbox{$K_{\veclambda}(\cdot,0)$} with absolute convergence
	of the resulting series of sine functions, we have
	\begin{equation*}
		\frac{\diff}{\diff x} K(x,0)
			= - 2 \pi \sum_{k=1}^{\infty}
						k \, \lambda_k^2 \, \sin 2 \pi k x
			\geq - 2 \pi \sigma_{\veclambda} \,.
	\end{equation*}
	This directly implies \mbox{$K(x,0) \geq 1 - 2 \pi \sigma_{\veclambda} \, x$}
	for~\mbox{$x \geq 0$}.
\end{proof}

For Korobov spaces with lower smoothness
we need a more specific technique for estimating the shape of the kernel.

\begin{lemma} \label{lem:kernelshape,r<=1}
	Consider the kernel
	\begin{equation*}
		K_1(x,y) = \beta_0 + \beta_1 \sum_{k=1}^{\infty} k^{-2r} \cos 2\pi k (x-y)
	\end{equation*}
	of a one-dimensional Korobov space~$\Hilbert_r^{\Korobov}(\Torus)$
	with smoothness~\mbox{$1/2 < r \leq 1$},
	where \mbox{$\beta_0,\beta_1 > 0$}
	and \mbox{$\beta_0 + \beta_1 \, \zeta(2r) = 1$}.
	Then the assumptions in \propref{prop:MCUBperiodic}
	are fulfilled with~$p=2r-1$,
	a universal constant $\alpha > 0$,
	and $R_0 = 1/(\sqrt{2} \, \pi)$.
\end{lemma}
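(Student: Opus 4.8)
The plan is to verify the three hypotheses of \propref{prop:MCUBperiodic} for the stated kernel, taking $p = 2r - 1 \in (0,1]$. The product structure and the diagonal normalisation cost nothing: $K_d = \prod_j K_1$ is built into the Korobov setting, and $K_1(x,x) = \beta_0 + \beta_1 \sum_{k=1}^{\infty} k^{-2r} = \beta_0 + \beta_1 \zeta(2r) = 1$ by hypothesis. Hence the entire content of the lemma is the local lower bound on $K_1$ near the diagonal.

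Since $K_1(x,y)$ depends only on $t := d_{\Torus}(x,y)$, I would rewrite the defect using $1 - \cos 2\pi k t = 2 \sin^2 \pi k t$, obtaining
\begin{equation*}
	1 - K_1(x,y) = 2 \beta_1 \sum_{k=1}^{\infty} k^{-2r} \sin^2(\pi k t) \,.
\end{equation*}
The elementary bound $\sin^2 \theta \leq \min\{1, \theta^2\}$ then suggests splitting the series at the crossover index $k_0 := 1/(\sqrt{2}\,\pi t)$, where $(\pi k t)^2 = 1/2$. For $k \leq k_0$ I would use $\sin^2(\pi k t) \leq \pi^2 k^2 t^2$, giving a contribution of order $t^2 \sum_{k \leq k_0} k^{2-2r} \asymp t^2 \, k_0^{\,3-2r} \asymp t^{2r-1}$; here the exponent $2 - 2r \in [0,1)$ makes the partial sum comparable, by integral comparison, to $k_0^{\,3-2r}/(3-2r)$, and $3 - 2r > 0$. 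For $k > k_0$ I would use $\sin^2 \leq 1$, so that tail is $\asymp \sum_{k > k_0} k^{-2r} \asymp k_0^{\,1-2r}/(2r-1) \asymp t^{2r-1}$, the series converging because $2r > 1$. Adding the two pieces yields $1 - K_1(x,y) \leq \alpha\, t^{2r-1} = \alpha\, d_{\Torus}(x,y)^{p}$, which is exactly the required local estimate. The role of the threshold $R_0 = 1/(\sqrt{2}\,\pi)$ is precisely to force $k_0 \geq 1$ for every $t \leq R_0$, so that the quadratic part of the split is non-empty and the index-$1$ term is captured there.

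The one delicate point — and what I expect to be the main obstacle — is the claim that $\alpha$ may be taken \emph{universal}, i.e.\ independent of $r$. The tail produces a factor $\tfrac{1}{2r-1}$ that blows up as $r \to (1/2)^+$, multiplied by $\beta_1$. Here one must exploit the normalisation: from $\beta_0 + \beta_1 \zeta(2r) = 1$ and $\beta_0 > 0$ we get $\beta_1 \leq 1/\zeta(2r)$, and since $(s-1)\zeta(s) \geq 1$ for $s \in (1,2]$ (with limiting value $1$ as $s \to 1^+$), setting $s = 2r$ gives $\beta_1/(2r-1) \leq 1/[(2r-1)\zeta(2r)] \leq 1$. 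Thus the dangerous factor is tamed by the smallness of $\beta_1$ imposed by the normalisation, while the remaining constant $\tfrac{1}{3-2r}$ from the quadratic part is harmless, lying in $[1,2]$ for $r \in (1/2,1]$. Collecting these bounds produces an absolute $\alpha$ and completes the verification. Note that the clean formula $\sigma_{\veclambda} = \beta_1 \zeta(2r-1)$ from \lemref{lem:kernelshape,r>1} is unavailable in this range, since $\zeta(2r-1)$ diverges for $r \leq 1$; this divergence is exactly why the coarser $p=1$ argument fails and this finer, smoothness-matched estimate with $p = 2r-1$ is needed.
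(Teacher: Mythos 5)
Your argument is correct, and it takes a genuinely different — and in fact cleaner — route than the paper's. The paper attacks the cosine series $\sum_k k^{-2r}\cos 2\pi kx$ head-on as a signed, oscillating sum: it splits at $k \le 1/(4x)$, lower-bounds the head via $\cos\theta \ge 1-\theta^2/2$, and for the tail must compare the sum with $\int_{1/(4x)}^{\infty} z^{-2r}\cos(2\pi xz)\,\mathup{d}z$ using a midpoint-rule error bound with local Lipschitz constants, then exploit the exact cancellation of $\int\cos$ over full periods to show the tail only costs $O(x^{2r-1})$. Your reformulation $1-K_1(x,y) = 2\beta_1\sum_k k^{-2r}\sin^2(\pi k t)$, $t = d_{\Torus}(x,y)$, turns the quantity to be controlled into a series of \emph{non-negative} terms, so only upper bounds are needed and the elementary $\sin^2\theta\le\min\{1,\theta^2\}$ with a split at $k_0\asymp 1/t$ does all the work; the entire oscillation/cancellation analysis disappears. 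The delicate point you flag — universality of $\alpha$ as $r\to(1/2)^+$ — is resolved by exactly the same device the paper uses, namely $\beta_1\le 1/\zeta(2r)$ combined with $\zeta(s)\ge(s-1)^{-1}$ for $s>1$, so the two proofs coincide on that step. One immaterial slip: $\tfrac{1}{3-2r}$ lies in $(\tfrac12,1]$ for $r\in(\tfrac12,1]$, not $[1,2]$; either way it is bounded by an absolute constant, which is all that is needed. Your closing observation about why $p=1$ fails (divergence of $\zeta(2r-1)$) matches the paper's motivation for this separate lemma.
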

\begin{proof}
	Due to symmetry by translations,
	we only need to study the kernel function
	$K(x,0)$ for~$x \in [0,R_0]$, where $0 < R_0 \leq 1/2$
	will be determined later.
	By assumption we have~\mbox{$K(0,0) = \beta_0 + \beta_1 \, \zeta(2r) = 1$}.
	In order to understand the behaviour for~$x > 0$,
	we need to understand the oscillating series
	\begin{equation*}
		\sum_{k=1}^{\infty} k^{-2r} \cos 2\pi k x = \sum_{k=1}^{\infty} g_x(k) \,,
	\end{equation*}
	where we sum over discrete points of the function
	\begin{equation*}
		g_x(z) := z^{-2r} \cos 2\pi x z \,.
	\end{equation*}
	
	Observe that we have \mbox{$g_x(z) \geq 0$}
	for \mbox{$0 < z \leq 1/(4x)$}.
	From the series representation of the cosine function we further know
	\begin{equation*}
		g_x(z)
			\geq z^{-2r} \, \left(1 - \frac{(2\pi x z)^2}{2}\right)
			=: \tilde{g}_x(z) \,.
	\end{equation*}
	Here we have~\mbox{$\tilde{g}_x(z) \geq 0$}
	for \mbox{$0 < z \leq 1/(\sqrt{2} \, \pi x)$}.
	We assume $x \leq 1/(\sqrt{2} \, \pi)$
	so that at least $k=1$ lies within that range.
	Then
	\begin{align*}
		\sum_{0 < k \leq 1/(4x)} g_x(k)
			&\,\geq\, \sum_{0 < k \leq 1/(\sqrt{2} \, \pi x)}
									\tilde{g}_x(k) \\
			&\,=\, \zeta(2r)
								- \sum_{k > 1/(\sqrt{2} \, \pi x)} k^{-2r}
								- 2\pi^2 \, x^2
										\sum_{0 < k \leq 1/(\sqrt{2} \, \pi x)} k^{2-2r} \,.\\
	\intertext{%
	The summands of the series
	may be interpreted as midpoints of intervals
	of length~$1$, that way underestimating the integral of the convex function
	\mbox{$z \mapsto z^{-2r}$} on that interval.
	The finite sum within the third term
	can be replaced by an integral over the non-decreasing function
	\mbox{$z \mapsto z^{2-2r}$} since $1/2 < r \leq 1$.
	Hence we estimate}
			&\,\geq\, \zeta(2r)
									- \int_{1/(\sqrt{2} \, \pi x) - 1/2}^{\infty}
												z^{-2r}
											\dint z
									- 2 \pi^2 \, x^2
											\int_{1}^{1/(\sqrt{2} \,\pi x) + 1}
													z^{2-2r}
												\dint z \\
			&\,=\, \zeta(2r)
							- \frac{1}{2r-1} \left(\frac{1}{\sqrt{2} \, \pi x}
																			- \frac{1}{2}
																\right)^{1-2r}\\
			&\phantom{\,=\, \zeta(2r)}
							- \frac{2 \pi^2}{3-2r} \, x^2 \,
									\left(\left(\frac{1}{\sqrt{2} \,\pi x} + 1
												\right)^{3-2r} - 1
									\right) \,. \\
	\intertext{%
	With $x \leq 1/(\sqrt{2} \, \pi)$,
	and bearing in mind~$1-2r < 0 < 3-2r$, we can further simplify,}
			&\,\geq\, \zeta(2r)
								- \frac{1}{2r-1} \left(\frac{1}{2 \sqrt{2} \, \pi x}
																\right)^{1-2r}
								- \frac{2 \pi^2}{3-2r} \, x^2 \,
										\left(\frac{\sqrt{2}}{\pi x} \right)^{3-2r} \,. \\
	\intertext{%
	Recall the asymptotic behaviour of the Riemann zeta function beyond its simple pole,
	\mbox{$\zeta(2r) \geq (2r-1)^{-1}$}
	for~\mbox{$r > 1/2$},
	so we end up with}
		\sum_{0 < k \leq 1/(4x)} g_x(k)
			&\,\geq\, \zeta(2r) \, (1 - c_1 \, x^{2r-1}) \,,
	\end{align*}
	where~$c_1 > 0$ is a universal constant.
	
	We continue estimating the tail of the series,
	\begin{equation*}
		\sum_{k > 1/(4x)} g_x(k)
			\,\approx\, \int_{1/(4x)}^{\infty} g_x(z) \dint z \,.
	\end{equation*}
	Here, the integral is approximated by one function value~\mbox{$g_x(k_j)$}
	within each of the intervals
	\mbox{$I_j := \left(1/(4x) + j,\, 1/(4x) + j + 1\right]$},
	all of which have length~$1$,
	namely we have the value at
	\mbox{$k_j := \left\lfloor 1/(4x) + j + 1 \right\rfloor$},
	\mbox{$j \in \N_0$}.
	The local Lipschitz constant~$L_j$ of~\mbox{$g_x(z)$} on $I_j$
	is bounded by
	\begin{align*}
		L_j &\,\leq\, \left[ 2\pi x \, z^{-2r}
													+ 2r \, z^{-2r-1}
									\right]_{z=1/(4x) + j}\\
			&\,\leq\, 2 \, (\pi + 4r) \, x \, \left(j+1\right)^{-2r} \,,
	\end{align*}
	where we exploited $z \geq 1/(4x)$ and
	$0 < x \leq 1/(\sqrt{2} \, \pi) < 1/4$.
	This can be plugged into a basic result on integral approximation,
	\begin{equation*}
		\left\vert g_x(k_j) - \int_{I_j} g_x(z) \dint z
		\right\vert
			\,\leq\, \frac{L_j}{2} \,,
	\end{equation*}
	which is similar to the midpoint rule,
	and leads to the estimate
	\begin{align} \label{eq:tail_ser=int}
		\left| \sum_{k > 1/(4x)} g_x(k)
								- \int_{1/(4x)}^{\infty} g_x(z) \dint z
		\right|
			&\,\leq\, (\pi + 4r) \, x \sum_{j=0}^{\infty} \left(1 + j\right)^{-2r}
				\nonumber\\
			&\,=\, (\pi + 4r) \, \zeta(2r) \, x\,.
	\end{align}
	Since~\mbox{$\cos 2\pi x z \leq 0$}
	for~\mbox{$z \in \left[(4 l + 1)/(4x),\, (4 l + 3)/(4x)\right]$},
	\mbox{$l \in \N_0$}, we have
	\begin{equation} \label{eq:g(z)>}
		g_x(z) \,\geq\, \left(\frac{4 l + 1}{4x}\right)^{-2r} \, \cos 2\pi x z
	\end{equation}
	in that interval.
	Conversely, \mbox{$\cos 2\pi x z \geq 0$}
	for~\mbox{$z \in \left[(4 l - 1)/(4x),\, (4 l + 1)(4x)\right]$},
	\mbox{$l \in \N$},
	so \eqref{eq:g(z)>} holds accordingly.
	Together this gives
	\begin{align*}
		\int_{1/(4x)}^{\infty} g_x(z) \dint z
			&\,\geq\, \left(\frac{1}{4x}\right)^{-2r}
									\underbrace{\int_{1/(4x)}^{3/(4x)}
																	\cos 2\pi x z 
																\dint z
															}_{= - 1/(\pi x)}\\
			&\qquad
						\,+\, \sum_{l=1}^{\infty}
										\left(\frac{4 l + 1}{4x}\right)^{-2r}
											\underbrace{\int_{(4l-1)/(4x)}^{(4l+3)/(4x)}
																			\cos 2\pi x z 
																		\dint z
																	}_{ = 0}\\
			&\,=\, - \frac{4^{2r}}{\pi} \, x^{2r-1} \,.
	\end{align*}
	Combined with~\eqref{eq:tail_ser=int},
	and under the constraint~$0 < x \leq 1/(\sqrt{2} \, \pi)$,
	this yields
	\begin{align*}
		\sum_{k > 1/(4x)} g_x(k)
			&\,\geq\,
					- (\pi + 4r) \, \zeta(2r) \, x - \frac{4^{2r}}{\pi} \, x^{2r-1} \\
			&\,\geq\, - c_2 \, \zeta(2r) \, x^{2r-1}\,,
	\end{align*}
	where~$c_2 > 0$ is a universal constant.
	
	From these considerations, and recalling that
	\mbox{$\beta_0 + \beta_1 \, \zeta(2r) = 1$},
	for the kernel function we obtain
	\begin{align*}
		K(x,0)
			&\,=\, \beta_1 + \beta_2 \sum_{k=1}^{\infty} k^{-2r} \cos 2 \pi k x \\
			&\,\geq\, \beta_1 + \beta_2 \, \zeta(2r) (1 - \alpha \, x^{2r-1}) \\
			&\,\geq\, 1 - \alpha \, x^{2r-1} \,,
	\end{align*}
	where~$\alpha = c_1 + c_2 > 0$ is a universal constant,
	and \mbox{$0 \leq x \leq 1 / (\sqrt{2} \, \pi) =: R_0$}.
\end{proof}

\begin{remark}
	For $r=1$, a closed formula for the kernel is known, we have
	\begin{equation*}
		K(x,0) = \beta_1 + \beta_2 \, \pi^2 \, \left(\frac{1}{6} - x + x^2\right)
	\end{equation*}
	for~$x \in [0,1]$, see~\cite[25.12.8]{DLMF}.
	
	\lemref{lem:kernelshape,r>1} produces unpleasant constants
	if the smoothness~$r > 1$ is getting close to~$1$.
	This can be avoided by taking the approach from \lemref{lem:kernelshape,r<=1},
	just using
	\begin{equation*}
		\sum_{0 < k \leq 1/(\sqrt{2} \, \pi x)} k^{2-2r}
			\leq \frac{1}{\sqrt{2} \, \pi x}
	\end{equation*}
	instead of some integral approximation.
	Note that now no factor~$(3-2r)^{-1}$ will occur.
	In the end we will get the estimate
	\begin{equation*}
		K(x,0) \geq 1 - \alpha \, x \,,
		\qquad \text{for $0 \leq x \leq R_0$,}
	\end{equation*}
	with the same values for~$\alpha$ and~$R_0$
	as in \lemref{lem:kernelshape,r<=1}.
	
	Larger exponents~$p =2r - 1$ will be possible for~$1 < r < 3/2$.
	There we need to conduct a similarly sophisticated study of
	the oscillating series
	representing the derivative~\mbox{$\frac{\diff}{\diff x} K(x,0)$}.
	This, however, comes at the price of unpleasant constants, especially
	near the critical points~$1$ and $3/2$.
\end{remark}

Together with \propref{prop:MCUBperiodic},
Lemma~\ref{lem:kernelshape,r>1} and \ref{lem:kernelshape,r<=1}
directly lead to the following tractability result.

\begin{theorem} \label{thm:Korobov}
	Consider unweighted Korobov spaces~%
	\mbox{$\Hilbert_r^{\Korobov}(\Torus^d) = \Hilbert_{\veclambda}(\Torus^d)$}
	as described above.
	For smoothness~\mbox{$r > 1/2$},
	fixing~\mbox{$\beta_0,\beta_1 > 0$}
	such that the initial error is constant~$1$ for all dimensions,
	we have polynomial tractability for the uniform approximation
	with Monte Carlo methods that use linear information,
	in detail,
	\begin{equation*}
		n^{\ran}(\eps,\Hilbert_r^{\Korobov}(\Torus^d)
										\hookrightarrow L_{\infty}(\Torus^d)) \\
			\preceq \eps^{-2} \, d \, (1 + \log d) \,.
	\end{equation*}
	The hidden constant may depend on~$r$.
\end{theorem}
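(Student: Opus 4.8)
The plan is to obtain the bound as a direct specialization of the general Monte Carlo upper bound in \propref{prop:MCUBperiodic}, feeding in the kernel-shape estimates supplied by \lemref{lem:kernelshape,r>1} and \lemref{lem:kernelshape,r<=1}. Since \propref{prop:MCUBperiodic} already delivers the desired $d\,(1+\log d)/\eps^2$ behaviour, the entire task reduces to checking its three hypotheses for the Korobov kernel and tracking that all resulting parameters depend on $r$ alone.

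First I would verify hypotheses \eqref{enum:MCUBperiodic,unweighted} and \eqref{enum:MCUBperiodic,init}. The unweighted product structure \eqref{enum:MCUBperiodic,unweighted} holds by the very definition of $\Hilbert_r^{\Korobov}(\Torus^d) = \Hilbert_{\veclambda}(\Torus^d)$, whose kernel is $K_{\veclambda}^d(\vecx,\vecy) = \prod_{j=1}^d K_{\veclambda}(x_j,y_j)$. The normalization \eqref{enum:MCUBperiodic,init}, namely $K_{\veclambda}(x,x) = \sum_{k=0}^{\infty} \lambda_k^2 = 1$, is exactly the assumption $\beta_0 + \beta_1\,\zeta(2r) = 1$ from \eqref{eq:beta12init}. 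Next I would split according to smoothness to supply the local polynomial decay \eqref{enum:MCUBperiodic,local}. For $r > 1$, condition (b) of \lemref{lem:kernelshape,r>1} holds because $\sigma_{\veclambda} = \beta_1\,\zeta(2r-1) < \infty$, and that lemma yields \eqref{enum:MCUBperiodic,local} with $p=1$, $\alpha = 2\pi\beta_1\,\zeta(2r-1)$, and $R_0 = 1/2$. For $1/2 < r \leq 1$, \lemref{lem:kernelshape,r<=1} yields \eqref{enum:MCUBperiodic,local} with $p = 2r-1$, a universal $\alpha$, and $R_0 = 1/(\sqrt{2}\,\pi)$. Together the two lemmas cover the full range $r > 1/2$.

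Finally I would plug these parameters into \propref{prop:MCUBperiodic} to obtain
\[
	n^{\ran}(\eps,\Hilbert_r^{\Korobov}(\Torus^d) \hookrightarrow L_{\infty}(\Torus^d))
		\leq C(p)\,(1 + \alpha - \log 2R_0)\,\frac{d\,(1+\log d)}{\eps^2},
\]
and observe that the prefactor $C(p)\,(1 + \alpha - \log 2R_0)$ is a finite constant determined entirely by $r$ (through $p$, $\alpha$, $R_0$, all of which are fixed once $r$ and the normalized $\beta_0,\beta_1$ are fixed). Absorbing it into the $\preceq$ notation gives the stated estimate. There is essentially no obstacle here, since the two lemmas are designed precisely to discharge hypothesis \eqref{enum:MCUBperiodic,local}; the only point deserving a word of caution is the behaviour near $r = 1$, where the constant from \lemref{lem:kernelshape,r>1} degrades as $\zeta(2r-1) \to \infty$. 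For a fixed $r$ this is harmless, and should one want a treatment that stays bounded across $r = 1$, the alternative estimate in the remark following \lemref{lem:kernelshape,r<=1} can be used instead. Since the theorem only claims an $r$-dependent hidden constant, no such uniformity is required.
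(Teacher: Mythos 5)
Your proposal is correct and coincides with the paper's own argument: the theorem is stated there as a direct consequence of \propref{prop:MCUBperiodic} combined with \lemref{lem:kernelshape,r>1} (for $r>1$) and \lemref{lem:kernelshape,r<=1} (for $1/2<r\leq 1$), exactly as you describe. Your verification of the three hypotheses and the remark on the $r$-dependence of the constant near $r=1$ are both accurate.
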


\begin{remark}[Loss of smoothness]
	\label{rem:SmoothnessLost}
	With the idealized algorithm from~\remref{rem:Lstoch}
	we loose smoothness~$1/2$.
	This points to the non-interpolatory nature of the approximation method we take.
	In detail, check that the Gaussian process~$\Psi$ associated
	to~$\Hilbert_r^{\Korobov}$ (for any equivalent norm)
	lies almost surely in~$\Hilbert_s^{\Korobov}$ for~\mbox{$r-s > 1/2$},
	and it is almost surely not in~$\Hilbert_s^{\Korobov}$
	for~\mbox{$r-s \leq 1/2$}.
	The argument is similar to that in \remref{rem:Lstoch},
	see also~\cite[Chapter~I,~{\S}2]{Kuo75}.
	The following picture is a simulation of a $1$-dimensional approximation on~\mbox{$\Hilbert_r^{\Korobov}(\Torus)$} with~\mbox{$r = 1.25$},
	\mbox{$\beta_0 = 0.4$}, and \mbox{$\beta_1 = 0.4473...$},
	and gives an idea of what the approximation looks like.
	Note that the method
	\mbox{$A_n^{\omega}(f)
					:= \frac{1}{n} \sum_{i=1}^n L_i^{\omega}(f) \, g_i^{\omega}$}
	depends on the chosen norm.
	
	\begin{center} \noindent
	\begin{tikzpicture}
	\begin{axis}[ xlabel = {$x$},
								ylabel = {$f(x)$, $[A_n(f)](x)$},
								xmin = 0, xmax = 1,
								ymin = -0.5, ymax = 1,
								x post scale = 1.90,
								y post scale = 1.1,
								legend pos=north west]
		\addplot [black,line width = 3pt] table {originalfcn.dat};
		\addplot [darkgrey,line width = 1pt] table {approx16.dat};
		\addplot [lightgrey,line width = 1.5pt] table {approx256.dat};
		\addlegendentry{input $f$}
		\addlegendentry{$A_n(f)$, $n=\phantom{0}16$}
		\addlegendentry{$A_n(f)$, $n=256$}
	\end{axis}	
	\end{tikzpicture}\\
	\textbf{Figure~1.\;} Simulation of random approximation
	for $f \in \Hilbert_r^{\Korobov}(\Torus)$.
	\end{center}
\end{remark}

\begin{remark}[Combined methods]
	The upper bounds in \thmref{thm:Korobov} do not give the optimal order
	of convergence for the approximation of Korobov functions.
	The rate of convergence we can guarantee by this
	is only~\mbox{$e^{\ran}(n) \preceq n^{-1/2}$} for~\mbox{$r > 1/2$},
	where the implicit constant may depend on~$d$ and $r$.
	
	In fact, the optimal rate can be determined as
	\begin{equation*}
		(n^{-1} \, (\log n)^{d-1})^r
			\preceq
				e^{\ran}(n,\Hilbert_r^{\Korobov}(\Torus^d) \embed L_{\infty}(\Torus^d))
			\preceq (n^{-1} \, (\log n)^{d-1})^r \, \sqrt{\log n} \,,
	\end{equation*}
	with implicit constants depending on~$d$ and $r$,
	see the author's joint paper with Byrenheid and Nguyen~\cite{BKN17}.
	For similar results on $L_q$-approximation \mbox{$1<q<\infty$},
	cf.~Fang and Duan~\cite{FD07}.
	The algorithmic difference is that
	half of the information budget is spent on collecting exact knowledge
	about the most important Fourier coefficients,
	while the remaining Fourier coefficients are collectively covered
	by the fundamental Monte Carlo method presented in \secref{sec:plainMC}.
\end{remark}

\subsection{Final Remarks on the Initial Error}
\label{sec:HilbertFinalRemarks}

For unweighted tensor product problems as in this paper,
the assumption of a normalized initial error is crucial for the new
approach to work.
If not, that is, if
\begin{equation*}
	e(0,\Hilbert(K_1) \hookrightarrow L_{\infty}(D_1))
		= \sup_{x \in D_1} \sqrt{K_1(x,x)}
		=: 1 + \gamma > 1 \,,
\end{equation*}
then for the $d$-dimensional problem
we have an exponentially large initial error
\begin{equation*}
	e(0,\Hilbert(K_d) \hookrightarrow L_{\infty}(D_d))
		= (1 + \gamma)^d \,,
\end{equation*}
where~$K_d$ is the product kernel and~\mbox{$D_d := \bigtimes_{j=1}^d D_1$}.
For the Gaussian field~$\Psi$ with covariance function~$K_d$ this implies
\begin{equation*}
	\expect \|\Psi\|_{\infty}
		\geq \sup_{\vecx \in D_d} \expect |\Psi_{\vecx}|
		= \sqrt{\frac{2}{\pi}} \, (1 + \gamma)^d \,.
\end{equation*}
In this situation \lemref{lem:stdMCapp}
can only give an impractical upper complexity bound
which grows exponentially in~$d$ for fixed~\mbox{$\eps > 0$}.
However, if the constant function~\mbox{$f=1$}
is normalized in~\mbox{$\Hilbert(K_1)$},
but \mbox{$\Hilbert(K_1)$} is non-trivial
and contains more than just constant functions,
then~\mbox{$\sup_{x \in D_1} \sqrt{K_1(x,x)} > 1$}.
This would be contrary to our requirements.

Therefore we must accept that the constant function~\mbox{$f = 1$} cannot be
a normalized function in~$\Hilbert(K_d)$ if we want to
break the curse for the $L_{\infty}$-approximation
with the present tools.
This stands in contrast to several other often-studied problems:
\begin{itemize}
	\item
		Take the $L_2$~approximation of periodic Korobov spaces,
		see e.g.\ Novak and Wo\'zniakowski~\cite[pp.~191--193]{NW08}.
		The initial error is~$1$ iff the largest singular value is~$1$,
		so it is natural to let the constant function~\mbox{$f = 1$}
		be normalized within the input space~$\Hilbert$.
	\item
		Consider multivariate integration over Korobov spaces,
		see for example Novak and Wo\'zniakowski~\cite[Chapter~16]{NW10}.
		The integral is actually the Fourier coefficient
		belonging to the constant function.
		The initial error is properly normalized iff
		the constant function~\mbox{$f = 1$} has norm~$1$.
\end{itemize}

\section*{Acknowledgements}

This paper is based on a chapter of the author's PhD thesis~\cite{Ku17},
which has been financially supported by the DFG Research Training Group 1523.
I wish to express my deepest gratitude to my PhD supervisor Erich Novak
for many valuable hints during my work on these results.
I also wish to thank my colleagues David Krieg and Van Kien Nguyen
for making me aware of the situations
which have been discussed in \secref{sec:HilbertFinalRemarks}.

\renewcommand{\refname}{Bibliography}

\end{document}